\numberwithin{equation}{section}
\newtheorem{theorem}[equation]{Theorem}
\newtheorem{proposition}[equation]{Proposition}
\newtheorem*{theorem*}{Theorem}
\newtheorem{lemma}[equation]{Lemma}
\newtheorem*{corollary*}{Corollary}
\theoremstyle{remark}
\theoremstyle{definition}
\newtheorem{example}[equation]{Example}
\def\XXint#1#2#3{{\setbox0=\hbox{$#1{#2#3}{\int}$}
	\vcenter{\hbox{$#2#3$}}\kern-.5\wd0}}
\newcommand{\Ad}{\operatorname{Ad}}
\newcommand{\ad}{\operatorname{ad}}
\newcommand{\C}{\mathbb C}
\newcommand{\R}{\mathbb R}
\newcommand{\g}{\mathfrak{g}}
\newcommand{\al}{\alpha}
\def\ga{\gamma}
\def\la{\lambda}
\def\Om{\Omega}
\def\be{\beta}
\newcommand{\End}{{\rm{End}}}
\newcommand{\Span}{{\rm span}}
\newcommand{\Abn}{{\rm{Abn}}}
\newcommand{\dd}{{\rm{d}}}
\newcommand{\f}{\mathfrak{f}}
\newcommand{\Pg}{P_\gamma}
\newcommand{\ep}{\varepsilon}
\newcommand{\IE}{\mathscr E}
\newcommand{\codim}{\operatorname{codim}}
\begin{document}

\title
{On the codimension of the abnormal set in step two Carnot groups}

\author[Ottazzi]{Alessandro Ottazzi}
\address[Ottazzi]{School of Mathematics and Statistics, UNSW Kensington campus, 2052 NSW Sydney.}
\email{a.ottazzi@unsw.edu.au}

\author[Vittone]{Davide Vittone}
\address[Vittone]{Dipartimento di Matematica, via Trieste 63, 35121 Padova, Italy.}
\email{vittone@math.unipd.it}

\thanks{{D.V. is supported by University of Padova, Project Networking, and GNAMPA of INdAM (Italy), project Campi vettoriali, superfici e perimetri in geometrie singolari.}}
\thanks{{A.O. was partially supported by the Australian Research Councils Discovery Projects funding scheme, projects no. DP140100531 and DP170103025}}
\keywords{{Sard property, endpoint map, abnormal curves, Carnot groups, sub-Riemannian geometry.}}

\subjclass[2010]{{53C17, 22E25, 14M17.}}

\begin{abstract}
In this article we prove that the codimension of the abnormal set of the endpoint map for certain  classes of  Carnot groups of step $2$
is at least three. 
Our result applies to all step 2 Carnot groups of dimension up to 7 and is a generalisation of a previous analogous result for step $2$ free nilpotent groups.
\end{abstract}


\maketitle
%

\section{Introduction}
Let $G$ be a {\em Carnot group}, i.e., a connected and simply connected Lie group with {\em stratified} nilpotent Lie algebra $\g=V_1\oplus\dots\oplus V_s$. 
Let $\End$ be the  endpoint map
\begin{eqnarray*}
\End: L^2([0,1],V_1)& \rightarrow &G\\
u\qquad&\mapsto& \gamma_u(1),
 \end{eqnarray*}
where
$ \gamma_u$ is the curve on $G$ leaving from the identity $e \in G$ with {$\dot\gamma_u(t)= (\dd L_{\gamma_u(t)} )_e  u (t)$},  $L_g$ denoting  left translation by $g$. The {\em abnormal set} 
is the subset $\Abn_G\subseteq G$ of all
singular values of the endpoint map. Equivalently, $\Abn_G$ is the union of all {\em abnormal curves} passing through the origin.
If 
the abnormal set
has measure $0$,
then $G$ is said to satisfy the 
{\em Sard Property}.
Proving the Sard Property in the general context of sub-Riemannian manifolds is one of the major open problems in sub-Riemannian geometry, see the questions in \cite[Sec.~10.2]{Montgomery} and Problem III in \cite{Agrachev_problems}.
See also {\cite{Agr_smooth},} \cite{Agrachev_Lerario_Gentile} and \cite{Rifford_Trelat}.
In~\cite{LDMOPV}, the authors of this note and others proved the Sard Property in a number of special cases, {and they also obtained the following first result concerning the interesting problem  of obtaining finer estimates on the size of the abnormal set.}

\begin{theorem*}[{\cite[Theorem 3.15]{LDMOPV}}]
In any free nilpotent group of  step $2$ 
the abnormal set is  an  algebraic subvariety of codimension $3$.
\end{theorem*}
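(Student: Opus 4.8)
The plan is to combine the Pontryagin characterization of abnormal curves in step $2$ with an incidence-variety dimension count. Write $\g = V_1\oplus V_2$, set $m=\dim V_1$, and recall that in the free case $V_2=\Lambda^2 V_1$ with bracket equal to the wedge, so $\dim G = m+\binom m2$. First I would make the endpoint map explicit in exponential coordinates: for $u\in L^2([0,1],V_1)$ one has $\End(u)=\exp(X_u+Z_u)$ with $X_u=\int_0^1 u$ and $Z_u=\tfrac12\int_0^1[x(s),u(s)]\,ds$, where $x(s)=\int_0^s u$. Fixing the scalar product on $V_1$ and writing $\langle J_{\lambda_2}a,b\rangle=\langle\lambda_2,[a,b]\rangle$ for the skew map $J_{\lambda_2}\colon V_1\to V_1$ attached to $\lambda_2\in V_2^*$, a direct differentiation and integration by parts shows that a covector $\lambda=(\lambda_1,\lambda_2)$ annihilates $\im d\End_u$ exactly when $\lambda_1-\tfrac12 J_{\lambda_2}x(1)+J_{\lambda_2}x(t)\equiv 0$. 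Since the only $t$-dependent term is $J_{\lambda_2}x(t)$, this forces $u(t)\in\ker J_{\lambda_2}$ for a.e.\ $t$ (and then $\lambda_1=0$). Hence $u$ is abnormal iff there is $\lambda_2\neq 0$ with $u$ valued a.e.\ in $W_{\lambda_2}:=\ker J_{\lambda_2}$.

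Consequently every abnormal curve from the origin stays in the subgroup generated by $W_{\lambda_2}$, and conversely that subgroup is filled by such curves (Chow's theorem applied inside it). Therefore
\[
\Abn_G = \bigcup_{\lambda_2\neq 0}\exp\big(\mathfrak w_{\lambda_2}\big),\qquad \mathfrak w_{\lambda_2}=W_{\lambda_2}\oplus[W_{\lambda_2},W_{\lambda_2}]=W_{\lambda_2}\oplus\Lambda^2 W_{\lambda_2}.
\]
In the free group $J_{\lambda_2}$ ranges over all skew-symmetric matrices; a nonzero one has even rank $2r\geq 2$, so $\dim W_{\lambda_2}=m-2r\leq m-2$ and $\dim\mathfrak w_{\lambda_2}=(m-2r)+\binom{m-2r}2$ is maximised at $r=1$. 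I would thus isolate the top stratum, from rank-$2$ forms, where $W_{\lambda_2}=P^\perp$ for the $2$-plane $P=\im J_{\lambda_2}$, and $P$ runs over the Grassmannian $\mathrm{Gr}(2,m)$.

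Next I would realise $\Abn_G$ as the image of an algebraic incidence variety. Writing $g=\exp(X+Z)$ and viewing $Z\in\Lambda^2 V_1$ as a skew map, the membership $g\in\exp(\mathfrak w_{P^\perp})$ is the pair of closed algebraic conditions $X\perp P$ and $P\subseteq\ker Z$, so
\[
\Sigma := \{(g,P)\in G\times\mathrm{Gr}(2,m): X\perp P,\ P\subseteq\ker Z\}
\]
is a closed subvariety; since $\mathrm{Gr}(2,m)$ is complete, its projection to $G$ is closed and algebraic, and the finitely many lower-rank strata (handled by the identical construction over $\mathrm{Gr}(m-2r,m)$) are again closed and of strictly smaller dimension, so $\Abn_G$ is an algebraic set. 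A fibre count gives $\dim\Sigma=2(m-2)+(m-2)+\binom{m-2}2=\dim G-3$, which already yields $\codim\Abn_G\geq 3$, because the image cannot have larger dimension than $\Sigma$.

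The remaining and most delicate point is the reverse inequality, i.e.\ that the projection $\Sigma\to\Abn_G$ does not drop dimension; I would prove the sharper statement that the generic fibre is a single point. For generic $X\in W=P^\perp$ and generic $Z\in\Lambda^2 W$, the kernel of $Z$ as a form on $V_1$ is $P\oplus\ker(Z|_W)$, and $Z|_W$ is a generic skew form on the $(m-2)$-dimensional space $W$, so $\ker(Z|_W)$ has dimension $0$ or $1$ according to the parity of $m$. Intersecting with the hyperplane $X^\perp$—and using that a generic $X\in W$ is not orthogonal to the extra kernel direction when $m$ is odd—one finds $\dim(\ker Z\cap X^\perp)=2$, whence $P$ is the \emph{unique} $2$-plane with $P\subseteq\ker Z$ and $X\perp P$. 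Thus the projection is generically one-to-one, $\dim\Abn_G=\dim\Sigma=\dim G-3$, and $\Abn_G$ is an algebraic subvariety of codimension exactly $3$. The main obstacle is precisely this generic-finiteness analysis: it is what prevents the codimension from exceeding $3$, and the parity discussion shows that the uniform value $3$ is not an accident of the dimension.
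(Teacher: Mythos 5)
Your characterization of abnormality is correct and, after unwinding, identical to the paper's: your condition ``$u(t)\in\ker J_{\lambda_2}$ a.e.\ for some $\lambda_2\neq 0$'' is exactly Proposition \ref{prop:abnstep2} ($\ga$ abnormal iff $[\Pg,V_1]\neq V_2$, i.e.\ iff $\Pg$ lies in the kernel of some nonzero skew form), only derived by an explicit first-variation/integration-by-parts computation instead of the $\Ad_{\gamma(t)}V_1$ formula of Proposition \ref{primo-lemma}. Your identity $\Abn_G=\bigcup_{\lambda_2\neq0}\exp(W_{\lambda_2}\oplus\Lambda^2W_{\lambda_2})$ is Proposition \ref{prop:assumiamor-2} in the free case, and your dimension count $\dim Gr(V_1,2)+\dim F_{m-2,2}=2(m-2)+(m-2)+\binom{m-2}{2}=\dim G-3$ is precisely the count the paper performs (compare Lemma \ref{lem:insiemeA} with $h=2$, and Lemma \ref{lem:inunsottospazio}). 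One genuine addition on your part: the generic-fibre analysis showing that for generic $(X,Z)$ the $2$-plane $P$ with $P\perp X$, $P\subseteq\ker Z$ is unique (via $\ker Z=P\oplus\ker(Z|_{P^\perp})$ and the parity discussion), which yields codimension \emph{exactly} $3$; the machinery displayed in this paper only gives the containment/upper bound (Theorem \ref{thm:free_step_two} states ``contained in''), the exactness being deferred to \cite{LDMOPV}. Your exactness argument is sound.

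There is one flawed step, though it is reparable in a line: you justify algebraicity of $\Abn_G=\pi(\Sigma)$ by completeness of the Grassmannian, i.e.\ by elimination theory. Over $\R$ this principle fails: the projection of a real algebraic set along a projective factor is closed and \emph{semialgebraic} but in general not algebraic (e.g.\ $\{(x,y,[s:t]):xs^2+yt^2=0\}$ projects onto $\{xy\leq 0\}$). Since the theorem asserts that $\Abn_G$ \emph{is} an algebraic subvariety, this cannot be waved through. The fix is to note that membership in $\pi(\Sigma)$ is itself a rank condition: $\exp(X+Z)\in\pi(\Sigma)$ iff $\dim(\ker Z\cap X^\perp)\geq 2$, i.e.\ iff the linear map $V_1\ni v\mapsto(Zv,\langle X,v\rangle)$ has rank at most $m-2$, which is cut out by the vanishing of all $(m-1)\times(m-1)$ minors of the $(m+1)\times m$ matrix obtained by stacking $Z$ and $X^t$ -- an honest determinantal (hence algebraic) description, and indeed the form in which the set is presented in \cite{LDMOPV}. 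With that substitution your argument is complete.
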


In the present paper we discuss generalizations of the result above to  some classes of step $2$  Carnot groups that are not necessarily free. Our purpose is to present different possible approaches to the problem. As our examples will show, going from the free to the general case does not seem to be a trivial step. {Before summarizing our contributions in the following statement, it is worth recalling that a free-nilpotent group of step 2 and {\em rank} $r$ has dimension $\frac{r^2+r}{2}$}

 \begin{theorem*}
 Let $G$ be a step $2$ Carnot group and let $\dim V_1= r$. The abnormal set $\Abn_G$ is contained in an algebraic subvariety of codimension  three (or more) in the following cases:
 \begin{itemize}
 \item[$(i)$] $G$ has dimension $r+1$ or $r+2$;
 \item[$(ii)$] $G$ has dimension $\frac{r^2+r}{2}-1$ or $\frac{r^2+r}{2}-2$;
 \item[$(iii)$]  $r=4$ and $G$ has dimension $7$.
 \end{itemize}
 \end{theorem*}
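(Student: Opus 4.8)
The plan is to reduce the statement to a single dimension count for an explicit union of subgroups, and then to carry that count out family by family.

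\emph{Reduction.} For a step $2$ group the standard characterization of abnormal extremals is especially clean: a control $u\in L^2([0,1],V_1)$ is abnormal if and only if there is $\lambda\in V_2^*\setminus\{0\}$ with $u(t)\in\ker\omega_\lambda$ for a.e.\ $t$, where $\omega_\lambda(X,Y):=\langle\lambda,[X,Y]\rangle$ is the skew form on $V_1$ attached to $\lambda$. Since $\ker\omega_\lambda$ is a fixed subspace, such a curve stays inside the subgroup $\exp(\mathfrak h_\lambda)$, $\mathfrak h_\lambda:=\ker\omega_\lambda\oplus[\ker\omega_\lambda,\ker\omega_\lambda]$ (a subalgebra, as $V_2$ is central). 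Writing $m=\dim V_2$ and $n=\dim G=r+m$, I would record
\[
\Abn_G\;\subseteq\;\bigcup_{0\neq\lambda\in V_2^*}\exp\!\big(\ker\omega_\lambda\oplus[\ker\omega_\lambda,\ker\omega_\lambda]\big).
\]
Because the right-hand side is the image of an algebraic incidence set, its Zariski closure is an algebraic subvariety of the same dimension, so it suffices to prove that this dimension is at most $n-3$. Two a priori constraints are used throughout: stratification forces $[V_1,V_1]=V_2$, hence $\omega_\lambda\neq0$ and $k:=\dim\ker\omega_\lambda\le r-2$ for every $\lambda\neq0$; and $[\ker\omega_\lambda,\ker\omega_\lambda]\subseteq\ker\lambda$, a hyperplane in $V_2$.

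\emph{Bookkeeping.} I would identify $V_2^*$ with a subspace $W\subseteq\Lambda^2V_1^*$ of codimension $d:=\binom r2-m=\dim\I$, where $\I\subseteq\Lambda^2V_1$ is the ideal of relations and $V_2=\Lambda^2V_1/\I$; then $[\ker\omega_\lambda,\ker\omega_\lambda]$ is the image of $\Lambda^2\ker\omega_\lambda$, so $\dim[\ker\omega_\lambda,\ker\omega_\lambda]=\binom k2-\dim(\Lambda^2\ker\omega_\lambda\cap\I)$. Stratifying $\mathbb P(W)$ by the corank $k$ and using that the skew forms of corank $\ge k$ form a determinantal locus of codimension $\binom k2$, I write $\dim(\mathbb P(W)\cap\{\mathrm{corank}\ge k\})=\binom r2-1-\binom k2-d+e_k$ with excess $e_k\ge0$, and set $\delta_k:=\min\dim(\Lambda^2\ker\omega_\lambda\cap\I)$ on the stratum. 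The crude bound $\dim\le(\text{base})+(\text{fibre})$ then telescopes, after the two $\binom k2$ terms cancel, to the single clean inequality $e_k\le\delta_k+(r-2-k)$ for every $k$; proving it on all strata gives codimension $\ge3$. As a sanity check, in the free case $W=\Lambda^2V_1^*$ gives $d=0$ and $e_k=\delta_k=0$, and the estimate becomes the equality $\binom r2-1+k=n-3$ precisely on the rank-$2$ stratum $k=r-2$, recovering \cite{LDMOPV}.

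\emph{The cases.} For $(i)$ the base $\mathbb P(W)$ has dimension $m-1\le1$, so I would argue directly: the only dangerous stratum has $k=r-2$, and when it is $0$-dimensional the inclusion $[\ker\omega_\lambda,\ker\omega_\lambda]\subseteq\ker\lambda$ already suffices, while when $m=2$ and the whole pencil lies in a corank locus one must show that the bracket of the common large kernel is small enough (it in fact vanishes). For $(iii)$, $\Lambda^2V_1^*\cong\R^6$ carries the Pfaffian quadratic form, the corank-$\ge2$ locus is the Grassmann/Pl\"ucker quadric $Q\cong\mathrm{Gr}(2,4)\subseteq\mathbb P^5$, and $\mathbb P(W)=\mathbb P^2$ either meets $Q$ in a conic (so $\dim S_2\le1$, $k=2$, $\dim[\ker,\ker]\le1$, total $\le 4=n-3$) or lies inside $Q$ as an $\alpha$- or $\beta$-plane, where a direct check gives $\ker\omega_\lambda\wedge\ker\omega_\lambda\subseteq\I$, i.e.\ $[\ker\omega_\lambda,\ker\omega_\lambda]=0$, and the total is again $4$. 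For $(ii)$ with $m=\binom r2-1$ ($d=1$): every corank locus $\Sigma_k$ is irreducible and not contained in the hyperplane $\mathbb P(W)$, so the section drops dimension by exactly one, forcing $e_k=0$ and the inequality with room to spare.

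\emph{Main obstacle.} The genuinely hard case is $(ii)$ with $m=\binom r2-2$ ($d=2$): a codimension-$2$ linear section of the irreducible determinantal variety $\Sigma_k$ can acquire an excess component ($e_k>0$) sitting inside $\mathbb P(W)$, exactly as a plane contained in $Q$ did in $(iii)$. One then cannot rely on expected dimensions and must establish the compensation $e_k\le\delta_k+(r-2-k)$ by hand, showing that whenever the two relations force extra degeneracy along a positive-dimensional family, a correspondingly large part of $\I$ is trapped inside $\Lambda^2\ker\omega_\lambda$. Quantifying this trade-off between the geometry of the linear system $W$ and the placement of $\I$ relative to the kernels is, I expect, the crux of the whole argument, and the reason the conclusion is confined to these particular dimensions rather than to all step $2$ groups.
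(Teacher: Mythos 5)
Your reduction is sound: in step two, abnormality of $\gamma$ is equivalent to $\Pg\subseteq\ker\omega_\lambda$ for some $\lambda\in V_2^*\setminus\{0\}$, which is a dual restatement of Propositions \ref{prop:abnstep2} and \ref{prop:assumiamor-2} (your union over covectors and the paper's union over subspaces $P$ with $[P,V_1]\neq V_2$ coincide, since every such $P$ sits in a maximal one of the form $\ker\omega_\lambda$). With this, your treatment of cases $(i)$, $(iii)$ and the $\frac{r^2+r}{2}-1$ half of $(ii)$ is correct, and genuinely different from the paper's. For $\dim V_2=2$ the only dangerous configuration is a pencil consisting entirely of rank-two forms; such a pencil has a common linear factor $\mu$, so for $x,y\in\ker\omega_\lambda\subseteq\ker\mu$ both structure forms vanish on $x\wedge y$ and $[\ker\omega_\lambda,\ker\omega_\lambda]=0$, as you assert --- a much shorter route than the paper's induction on $r$ with Heisenberg-factor splittings and quotients (Theorem \ref{plustwo}). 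For $r=4$, $\dim G=7$, your Pl\"ucker trichotomy is also correct: I checked that in both families of planes contained in the quadric ($W=\Lambda^2 H$ for a $3$-dimensional $H\subseteq V_1^*$, or $W=\theta\wedge V_1^*$) one indeed gets $\ker\omega_\lambda\wedge\ker\omega_\lambda\subseteq\mathcal I$; the paper instead uses the normal form of Lemma \ref{lem:basespeciale4+3} and explicit $3\times3$ minors (Theorem \ref{teo:4+3}). For $d=1$ your hyperplane-section argument (irreducibility of the Pfaffian loci, which are spanned by decomposables, plus bounding real dimension by complex dimension) parallels the nontrivial determinant condition in the proof of Theorem \ref{thm:liberomeno1}.

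The genuine gap is exactly where you flag it: case $(ii)$ with $\dim G=\frac{r^2+r}{2}-2$ is reduced but not proved. Your bookkeeping (Krull plus irreducibility gives $e_k\le 1$ for every $k$, so only the stratum $k=r-2$ is dangerous) leaves the single claim: if the codimension-two linear section of the decomposable locus $Gr(2,r)\subseteq\mathbb P(\Lambda^2V_1^*)$ has an excess component of dimension $2(r-2)-1$, then along it $\Lambda^2\ker\omega_\lambda\cap\mathcal I\neq\{0\}$. You state this compensation but never establish it, and it is precisely the content of Section \ref{sec:libero-due}, which is the bulk of the paper: writing $\mathcal I={\rm span}\{a,b\}$ with associated matrices $\Om$ and $C$, the paper shows by explicit rank computations on $\nabla F$ and $\nabla G$ that, away from loci of planes contained in a fixed hyperplane, the incidence set $M$ (resp.\ $N$) is a manifold of the expected dimension $2r-6$, splitting into the cases $\Om C\neq C\Om$ and $\Om C=C\Om$, the latter requiring the normal form of Lemma \ref{lem:casocommutativo}. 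Note moreover that the paper's mechanism is \emph{not} your $\delta_k$-compensation: excess is confined to sub-Grassmannians $Gr(H,2)$ of hyperplanes $H$ (e.g.\ $H\supseteq\ker(\Om C-C\Om)$ or $H\supseteq\ker C$), and these are harmless not because $\mathcal I$ is trapped in $\Lambda^2\ker\omega_\lambda$ there, but because their dimension $2(r-3)$ already equals the expected $2(r-2)-2$, so the total contribution is $\frac{r^2+r}{2}-5$ by Lemma \ref{lem:inunsottospazio}. As written, your proposal therefore proves $(i)$, $(iii)$ and the $\frac{r^2+r}{2}-1$ part of $(ii)$, while the $\frac{r^2+r}{2}-2$ part --- the hardest step of the theorem --- remains an acknowledged placeholder rather than a proof.
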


After having established the notation and having recalled some known results in Section~\ref{prel}, we state and prove our  contributions. In  Section~\ref{plus1and2} we consider Carnot groups of step $2$ and dimensions $r+1$ and $r+2$. 
This will be the content of Theorem~\ref{plusone} and Theorem~\ref{plustwo}. 
While the case $r+1$ is rather straightforward, in order to prove Theorem~\ref{plustwo} we reason by induction on $r$ and we need some fine observations on the bases of $V_1$. Already in dimension $r+3$  our technique apparently fails to being convenient. In Section~\ref{freeminus},  we consider dimensions $\frac{r^2+r}{2}-1$ and $\frac{r^2+r}{2}-2$ (Theorem~\ref{thm:liberomeno1} and Theorem~\ref{thm:Obiettivo}). 
{The idea here is to see the Carnot groups as quotients of free-nilpotent groups of step $2$ by $1$ and $2-$dimensional ideals, respectively.} Indeed, it turns out (see Proposition~\ref{prop:omomorfismo}) that if $\pi: F\to G$ is a homomorphic projection of stratified groups, then the abnormal curves in $G$ are  the abnormal curves in $F$ that remain abnormal under $\pi$. If the dimension of the kernel of $\pi$ is $1$ or $2$ and $F$ is free, we are able to study the abnormal curves of $G$ using this method. However, when the dimension of the kernel is higher, the computations become  more complicated. Finally, in Section~\ref{4+3} we prove Theorem~\ref{teo:4+3}, which concerns the case where $r=4$ and the dimension is $7$. 

{It can be easily checked that all stratified Lie groups up to dimension $7$ fall  in one of the cases $(i),(ii)$ or $(iii)$ above. In particular,  we have the following consequence.}

 \begin{corollary*}
{Let $G$ be a step $2$ Carnot group of topological dimension not greater than 7; then, the abnormal set $\Abn_G$ is contained in an algebraic subvariety of codimension at most three.}
 \end{corollary*}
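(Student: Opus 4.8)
The plan is to deduce the Corollary purely as a bookkeeping consequence of the three cases in the main Theorem, so the real work is a classification argument showing that every step $2$ Carnot group $G$ of dimension $n\le 7$ has its pair $(r,n)$, where $r=\dim V_1$, landing in one of the intervals $\{r+1,r+2\}$, $\{\frac{r^2+r}{2}-1,\frac{r^2+r}{2}-2\}$, or the exceptional slot $(r,n)=(4,7)$. First I would fix notation: write $n=\dim G$ and $m=\dim V_2=n-r$, and recall the two elementary constraints that any step $2$ stratification must satisfy, namely $r\ge 2$ (otherwise the group is abelian and there is no second layer, so $G=\R$ and the statement is vacuous or trivial) and $1\le m\le \binom{r}{2}=\frac{r^2-r}{2}$, the upper bound coming from the fact that $V_2$ is a quotient of $\Lambda^2 V_1$ under the bracket. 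These give $r+1\le n\le \frac{r^2+r}{2}$, which already pins $r$ to a short list once $n$ is bounded.

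Next I would run the finite enumeration. Since $n\le 7$ and $r\le n-1$, the only values of $r$ that can occur are $r\in\{2,3,4,5,6\}$, and for each such $r$ the admissible total dimensions are $r+1\le n\le\min\!\big(7,\frac{r^2+r}{2}\big)$. I would tabulate: for $r=2$ the bracket space has dimension at most $1$, so $n=3$ only, which is $n=r+1$, case $(i)$; for $r=3$ we have $\frac{r^2+r}{2}=6$ and $n\in\{4,5,6\}$, where $n=4=r+1$ and $n=5=r+2$ are case $(i)$, while $n=6=\frac{r^2+r}{2}$ is the free case covered by the quoted theorem from \cite{LDMOPV} (equivalently the ``$-0$'' endpoint of the range treated alongside case $(ii)$); for $r=4$ we have $\frac{r^2+r}{2}=10$ and $n\in\{5,6,7\}$, with $n=5=r+1$ and $n=6=r+2$ in case $(i)$ and $n=7$ being exactly the exceptional case $(iii)$; for $r=5$ we have $\frac{r^2+r}{2}=15$ and $n\in\{6,7\}$, both of which are $r+1$ and $r+2$, case $(i)$; and for $r=6$ the constraint $n\le 7$ forces $n=7=r+1$, case $(i)$. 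In every line the pair $(r,n)$ is absorbed by $(i)$, $(ii)$, $(iii)$, or the known free step $2$ result, so the codimension-three conclusion follows from the main Theorem applied case by case.

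The one point that needs genuine care, and which I expect to be the only real obstacle, is the boundary value $n=\frac{r^2+r}{2}$ itself (here $r=3$, $n=6$), since the main Theorem as stated covers $\frac{r^2+r}{2}-1$ and $\frac{r^2+r}{2}-2$ but not $\frac{r^2+r}{2}$ on the nose. I would resolve this by observing that $n=\frac{r^2+r}{2}$ forces $\dim V_2=\binom{r}{2}$, i.e. the bracket map $\Lambda^2 V_1\to V_2$ is an isomorphism, which is precisely the defining property of the free nilpotent group of step $2$ and rank $r$; hence this case is covered verbatim by the theorem of \cite{LDMOPV} quoted at the start. It is worth checking that no genuinely new group appears at $n=6,r=3$ beyond the free one, which is immediate from the dimension count just given. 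With that boundary case folded in, the finite check above exhausts all possibilities and the Corollary follows, the codimension being ``at most three'' in the weak sense that $\Abn_G$ sits inside an algebraic variety of codimension three (or more), as guaranteed by each of the invoked statements.
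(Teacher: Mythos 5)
Your proposal is correct and is essentially the paper's own argument: the paper proves the Corollary by exactly the finite enumeration of pairs $(r,n)$ with $r+1\le n\le\min\bigl(7,\tfrac{r^2+r}{2}\bigr)$, which it dismisses as ``easily checked.'' Your extra care at the boundary $n=\tfrac{r^2+r}{2}$ (i.e.\ $r=3$, $n=6$, where $(i)$--$(iii)$ do not literally apply and one must invoke the free step~$2$ theorem of \cite{LDMOPV}, restated as Theorem~\ref{thm:free_step_two}) is a legitimate and correctly resolved point that the paper's blanket claim glosses over.
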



\section{Preliminaries}\label{prel}
In this section we establish the notation we are going to use and we recall some preliminary facts that were proved in~\cite{LDMOPV}.

A {\em Carnot} (or {\em stratified}) {\em group} $G$ is a connected, simply connected and nilpotent Lie group whose Lie algebra $\g$ is {\em stratified}, i.e., it has a direct sum decomposition $\g=V_1\oplus\dots\oplus V_s$  such  that
\[
V_{j+1}=[V_j,V_1]\ \forall\: j=1,\dots,s-1,\qquad V_s\neq\{0\}\qquad\text{and}\qquad [V_s,V_1]=\{0\}.
\]
We refer to the integer $s$ as the {\em step} of $G$ and to $r:=\dim V_1$ as its {\em rank}. The group identity will be denoted by $e$.  We will indifferently view $\g$ either as the  tangent space to $G$ at $e$ or as the Lie algebra of left-invariant vector fields in $G$. Recall that in this case the exponential map $\exp:\g\to G$ is a diffeomorphism; we write $\log$ for the inverse of $\exp$. When we  use $\log$  to  identify $\g$ with $G$, the group law  on $G$   becomes
a polynomial map $\g \times \g \to \g$ with $0 \in \g$ playing the role of the identity element $e \in G$.
For all $g\in G$, denote by $L_g$ and $R_g$ the left and right multiplication by $g$, respectively. We write $\Ad_g:= {\rm d}( R_{g^{-1}}\circ L_g)_e:\g\to\g$. For $X\in\g$ we define $\ad_X:\g\to\g$ by $\ad_X(Y):=[X,Y]$.

Fix $u\in L^2([0,1],V_1)$ and denote by $\gamma_u$ the curve  in $G$ solving
\begin{equation}\label{ODE}
\frac{\dd\gamma}{\dd t}(t)=\left(\dd L_{\gamma(t)}\right)_e  u(t), 
\end{equation}
with initial condition $\gamma(0)=e$. Vice versa, if $\gamma:[0,1]\to G$ is an absolutely continuous curve  that solves   \eqref{ODE} for some $u\in L^2([0,1],V_1)$, then we say that $\gamma$ is  {\em horizontal}   and that $u$ is its {\em control}.  The {\em endpoint map} $\End$ is
\begin{eqnarray*}
\End: L^2([0,1],V_1)& \rightarrow &G\\
u\qquad&\mapsto& \gamma_u(1).
\end{eqnarray*}
We will sometimes write $\End^G$ to underline the group $G$ we are working with.

Let $\gamma:[0,1]\to G$ be a horizontal curve   with control $u$ and such that $\ga(0)=e$. If  $ {\rm Im}(\dd \End_u)\subsetneq T_{\gamma(1)} G$ we say that $\gamma$ is {\em abnormal}.
In other words, a horizontal curve $\gamma:[0,1]\to G$ is   abnormal if and only if $ \gamma(1)$ is a critical value of  $\End$. The main goal of this paper is the study of the {\em abnormal set} $\Abn_G$ of $G$ defined by
\begin{equation}\label{abnormal:set}
\Abn_G:=\{ \gamma(1)\, :\, \gamma:[0,1]\to G \text{ abnormal }, \gamma(0)=e\}.
\end{equation}

The following result is proved in~\cite[Proposition 2.3]{LDMOPV}.

\begin{proposition}\label{primo-lemma}
If $\gamma:[0,1]\to G$ is a  horizontal curve leaving from $e$ with control $u $, then 
\begin{equation}\label{for:Image:of:dEnd}	
{\rm Im }(\dd\End_u) =( \dd  R_{\gamma(1)} )_e (\Span\{\Ad_{\gamma(t)}V_1\;:\; t \in [0,1] \}). 
 \end{equation}
\end{proposition}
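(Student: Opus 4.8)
The plan is to differentiate the endpoint map directly via the variation-of-parameters (Duhamel) formula and then exploit the left-invariance of the dynamics to recognise the $\Ad$-orbit structure. First I would fix a control $u$ with reference trajectory $\gamma=\gamma_u$ and perturb $u\rightsquigarrow u+\ep v$ for $v\in L^2([0,1],V_1)$. Setting $w(t):=\frac{\dd}{\dd\ep}\big|_{\ep=0}\gamma_{u+\ep v}(t)\in T_{\gamma(t)}G$ and differentiating the defining equation~\eqref{ODE} in $\ep$, one obtains the linear inhomogeneous ODE $\dot w(t)=(\dd L_{\gamma(t)})_e v(t)+A(t)w(t)$ with $w(0)=0$ (since $\gamma_{u+\ep v}(0)=e$ for all $\ep$), where $A(t)$ is the linearisation of the smooth, indeed polynomial, right-hand side along $\gamma$. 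Variation of parameters then gives
\begin{equation*}
\dd\End_u(v)=w(1)=\int_0^1(\dd\Phi_{1,t})_{\gamma(t)}\big[(\dd L_{\gamma(t)})_e v(t)\big]\,\dd t,
\end{equation*}
where $\Phi_{1,t}$ is the flow from time $t$ to time $1$ of the time-dependent left-invariant field $q\mapsto(\dd L_q)_e u(\cdot)$.

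The key simplification comes from left-invariance: the flow of a left-invariant field is given by right translations. Indeed, by left-invariance the map $q\mapsto q\,\gamma(t)^{-1}\gamma(\tau)$ solves~\eqref{ODE} in $\tau$ for every initial point $q$ at time $t$, so $\Phi_{1,t}=R_{\gamma(t)^{-1}\gamma(1)}$. Using the identity $R_{\gamma(t)^{-1}\gamma(1)}\circ L_{\gamma(t)}=R_{\gamma(1)}\circ\big(R_{\gamma(t)^{-1}}\circ L_{\gamma(t)}\big)$ and differentiating at $e$, the definition $\Ad_{\gamma(t)}=\dd(R_{\gamma(t)^{-1}}\circ L_{\gamma(t)})_e$ yields $(\dd\Phi_{1,t})_{\gamma(t)}(\dd L_{\gamma(t)})_e=(\dd R_{\gamma(1)})_e\circ\Ad_{\gamma(t)}$. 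Pulling the constant linear isomorphism $(\dd R_{\gamma(1)})_e$ out of the integral gives the clean expression
\begin{equation*}
\dd\End_u(v)=(\dd R_{\gamma(1)})_e\int_0^1 \Ad_{\gamma(t)}v(t)\,\dd t .
\end{equation*}

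It then remains to identify the range of the integral operator $v\mapsto\int_0^1\Ad_{\gamma(t)}v(t)\,\dd t$ with $W:=\Span\{\Ad_{\gamma(t)}V_1:t\in[0,1]\}$. Since this operator is linear with values in the finite-dimensional space $\g$, its image is a linear subspace, hence closed. The inclusion ``$\subseteq$'' is immediate, because each integrand lies in the closed subspace $W$. For ``$\supseteq$'', fixing $t_0$ and $\xi\in V_1$ I would test against the controls $v_n=n\,\mathbf{1}_{[t_0,t_0+1/n]}\xi\in L^2$; continuity of $t\mapsto\Ad_{\gamma(t)}\xi$ gives $\int_0^1\Ad_{\gamma(t)}v_n\,\dd t\to\Ad_{\gamma(t_0)}\xi$, so every generator of $W$ lies in the closure of the image, and hence in the image itself. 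Applying the isomorphism $(\dd R_{\gamma(1)})_e$ to both sides yields~\eqref{for:Image:of:dEnd}.

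I expect the only genuinely delicate point to be the rigorous justification of the Duhamel step in the $L^2$ framework, namely the Fréchet differentiability of $\End$ and the claim that $w$ solves the linearised Carathéodory equation with the stated integral representation. Once differentiability and the variation formula are established, the Lie-theoretic reduction to $\Ad_{\gamma(t)}$ and the density argument for the range are routine.
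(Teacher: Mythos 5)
Your proof is correct and is essentially the same argument as the one the paper relies on: Proposition~\ref{primo-lemma} is quoted from \cite[Proposition 2.3]{LDMOPV}, where the differential is computed exactly as you do, via the variational (Duhamel) formula together with the observation that the flow of a time-dependent left-invariant field is a right translation, yielding $\dd\End_u(v)=(\dd R_{\gamma(1)})_e\int_0^1\Ad_{\gamma(t)}v(t)\,\dd t$, after which the range is identified with the span by the same closedness-plus-approximate-identity argument (smoothness of $\End$ on $L^2$ being standard for control-affine systems, and elementary here since in exponential coordinates the group law is polynomial). The only cosmetic point is that at $t_0=1$ your bump controls should be supported on $[t_0-1/n,t_0]$ rather than $[t_0,t_0+1/n]$.
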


It is clear from \eqref{for:Image:of:dEnd} that ${\rm Im }(\dd\End_u)$ depends only on $\ga$ and not on its parametrization (i.e., on the control $u$). Given a horizontal curve $\ga:[0,1]\to G$ with $\ga(0)=e$ we define
\begin{equation}\label{eq:defIE}
\IE_\ga:=\Span\{\Ad_{\gamma(t)}V_1\;:\; t \in [0,1] \}.
\end{equation}
By Proposition \ref{primo-lemma}, $\ga$ is abnormal if and only if $\IE_\ga$ is not the whole Lie algebra $\g$. In fact, $\IE_\ga\subset T_eG\equiv\g$ is the image under the diffeomorphism $( \dd  R_{\gamma(1)} )_e^{-1}$ of ${\rm Im }(\dd\End_u)\subset T_{\ga(1)}G$ for any control $u$ associated with $\ga$. Evaluating \eqref{eq:defIE} at $t=0$ and $t=1$ yields
\begin{equation}\label{IER_V_L_V}
V_1 +\Ad_{\ga(1)}V_1\subseteq \IE_\ga.
\end{equation}
We will sometimes use the notation $\IE^G_\ga$  if we need to stress the group under consideration.

\begin{example}\label{ex:Rn}
There are no abnormal curves in $\R^n$ (seen as a step 1 Carnot group). Indeed, by \eqref{IER_V_L_V}, $\g=V_1= \IE_\ga$ for any $\ga$. In particular, $\Abn_{\R^n}=\emptyset$.
\end{example}

We  restrict our analysis to a Carnot group $G$ associated with a stratified Lie algebra $\g=V_1\oplus V_2$ of step 2. Denote by $\pi_{V_1}:\g\to V_1$ the canonical projection; for any fixed $X\in\g$ we recall the formula $\Ad_{\exp(X)}=e^{\ad X}$ to get
\begin{equation}\label{eq:Adstep2}
\Ad_{\exp(X)}(Y) = Y+[X,Y] = Y+[\pi_{V_1}(X),Y] \qquad\forall\: Y\in\g.
\end{equation}
We use this formula to compute more efficiently the linear space $\IE_\ga$ defined in \eqref{eq:defIE}. 

\begin{proposition}\label{prop:abnstep2}
Let $G$ be a Carnot group associated with a stratified Lie algebra $\g=V_1\oplus V_2$ of step 2. Let $\ga$ be a horizontal curve  in $G$ with $\ga(0)=e$ and define the linear space $\Pg\subseteq\g$ by
\begin{equation}\label{eq:defPg}
\Pg:=\Span\{\pi_{V_1}(\log \gamma(t))\;:\; t \in [0,1]\},
\end{equation}
where $\log:G\to\g$ is the inverse of $\exp$. Then 
\[
\IE_\ga=V_1\oplus[\Pg,V_1]
\]
and, in particular, $\ga$ is abnormal if and only if $[\Pg,V_1]\neq V_2$.
\end{proposition}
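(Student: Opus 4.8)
The plan is to reduce everything to the explicit adjoint formula \eqref{eq:Adstep2}. Writing $p(t):=\pi_{V_1}(\log\ga(t))$, so that $\Pg=\Span\{p(t):t\in[0,1]\}$ and $\Pg\subseteq V_1$, formula \eqref{eq:Adstep2} gives for every $Y\in V_1$ and every $t$
\[
\Ad_{\ga(t)}Y=Y+[p(t),Y],
\]
where $Y\in V_1$ and the second summand lies in $[V_1,V_1]\subseteq V_2$. Thus each generator of $\IE_\ga=\Span\{\Ad_{\ga(t)}V_1:t\in[0,1]\}$ splits canonically into a $V_1$-part and a $V_2$-part, and the whole statement becomes a bookkeeping argument inside the direct sum $V_1\oplus V_2$.

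For the inclusion $\IE_\ga\subseteq V_1\oplus[\Pg,V_1]$ I would observe that every generating vector $Y+[p(t),Y]$ has $Y\in V_1$ and $[p(t),Y]\in[\Pg,V_1]$ (since $p(t)\in\Pg$ and $Y\in V_1$), hence lies in $V_1+[\Pg,V_1]$; taking spans preserves this. Because $[\Pg,V_1]\subseteq V_2$ meets $V_1$ only in $0$, the sum $V_1+[\Pg,V_1]$ is direct, which justifies the $\oplus$ notation throughout.

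For the reverse inclusion the idea is to use the basepoint of the curve. Evaluating at $t=0$ gives $\ga(0)=e$, hence $p(0)=0$ and $\Ad_{\ga(0)}Y=Y$, so that $V_1=\Ad_{\ga(0)}V_1\subseteq\IE_\ga$. Once $V_1\subseteq\IE_\ga$ is available, I would subtract: for every $t$ and $Y\in V_1$ the vector $[p(t),Y]=\Ad_{\ga(t)}Y-Y$ is a difference of two elements of the linear space $\IE_\ga$, hence belongs to $\IE_\ga$. Using bilinearity of the bracket together with the fact that $\IE_\ga$ is a linear subspace, the span of all such $[p(t),Y]$ is precisely $[\Pg,V_1]$, giving $[\Pg,V_1]\subseteq\IE_\ga$. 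Combining the two inclusions yields $\IE_\ga=V_1\oplus[\Pg,V_1]$.

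The ``in particular'' statement is then immediate: since $\g=V_1\oplus V_2$ and $[\Pg,V_1]\subseteq V_2$, one has $\IE_\ga=\g$ exactly when $[\Pg,V_1]=V_2$, so by Proposition~\ref{primo-lemma} the curve $\ga$ is abnormal if and only if $[\Pg,V_1]\neq V_2$. I do not expect any serious obstacle in this argument; the only points demanding care are that the span and bracket operations interact correctly (so that first spanning and then bracketing really produces $[\Pg,V_1]$), and that one first extracts $V_1\subseteq\IE_\ga$ from the endpoint $t=0$ before performing the subtraction.
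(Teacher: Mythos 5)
Your proof is correct and takes essentially the same route as the paper: both rest on the step-2 adjoint formula \eqref{eq:Adstep2} together with the inclusion $V_1\subseteq\IE_\ga$ (which the paper cites from \eqref{IER_V_L_V} and which is exactly your $t=0$ evaluation), followed by the span/bilinearity bookkeeping identifying $\Span\{[\pi_{V_1}(\log\ga(t)),Y]\}$ with $[\Pg,V_1]$. The paper merely compresses your two-inclusion argument into a single chain of equalities.
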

\begin{proof}
Using \eqref{eq:Adstep2} and the fact that $V_1\subseteq\IE_\ga$ (see \eqref{IER_V_L_V}), we obtain
\[
\begin{split}
\IE_\ga & = \Span\{Y+[\pi_{V_1}(\log \ga(t )),Y]\;:\; t \in [0,1], Y\in V_1 \}\\
& = V_1 \oplus  \Span\{[\pi_{V_1}(\log \ga(t )),Y]\;:\; t \in [0,1], Y\in V_1 \}\\
& = V_1\oplus[\Pg,V_1]
\end{split}
\]
as stated.
\end{proof}

\begin{example}\label{ex:Heisenberg}
Given an integer $n\geq 1$, the $n$-th {\em Heisenberg group} $H^n$ is the Carnot group associated with the $(2n+1)$-dimensional stratified Lie algebra $V_1\oplus V_2$ of step 2 where
\[
V_1:=\Span\{ X_1,\dots,X_n,Y_1,\dots,Y_n\},\qquad V_2:=\Span\{T\}
\]
and the only non-zero commutation relations between the generators are given by
\[
[X_i,Y_i]=T\quad\text{ for any }i=1,\dots,n.
\]
In particular, for any horizontal curve $\ga$ with $\ga(0)=e$ we have $[\Pg,V_1]=V_2$ unless $\Pg=\{0\}$. It follows that the only abnormal curve in $H^n$ is the constant curve $\bar\gamma(t)\equiv e$ and, in particular, $\Abn_{H^n}=\{e\}$. We note here for future reference that  $\IE_{\bar\ga}=V_1$.
\end{example}

Proposition \ref{prop:abnstep2} allows to give a completely algebraic description of $\Abn_G$. Consider an abnormal curve $\ga$ in $G$ leaving from the identity $e$ and let $\Pg$ be as in \eqref{eq:defPg}. Then Im $\ga$ is contained in the subgroup of $G$ associated to the Lie algebra generated by $\Pg$, i.e.,
\[
\text{Im }\ga\subseteq \exp(\Pg\oplus[\Pg,\Pg]).
\]
Assume for a moment that  $\dim \Pg\in \{r,r-1\}$, i.e., that $\Pg$ is either the whole horizontal layer $V_1$ or a hyperplane of $V_1$; in both cases one would have  $[\Pg,V_1]=V_2$ and, by Proposition \ref{prop:abnstep2}, $\ga$ would not be abnormal. This proves that
\[
\Abn_G\subseteq \bigcup\{ \exp(P\oplus[P,P])\,:\, P \text{ linear subspace of }\g,\ \dim P\leq r-2, [P,P]\neq V_2\}.
\]
The reverse inclusion holds as well. Indeed, if $P$ is a linear subspace of $\g$ such that $\dim P\leq r-2$ and $[P,P]\neq V_2$, then  by Chow connectivity theorem any point in the subgroup $H:=\exp(P\oplus[P,P])$ can be connected to $e$ by a horizontal curve $\ga$ entirely contained in $H$, and such a $\ga$ must be abnormal by Proposition \ref{prop:abnstep2}. We have therefore proved the following result (see also \cite[Section 3.1]{LDMOPV}).

\begin{proposition}\label{prop:assumiamor-2}
Let $G$ be a Carnot group associated with a  Lie algebra $\g=V_1\oplus V_2$ of step 2; then
\[
\Abn_G= \bigcup\{ \exp(P\oplus[P,P])\,:\, P \text{ linear subspace of }\g,\ \dim P\leq r-2, [P,P]\neq V_2\}.
\]
\end{proposition}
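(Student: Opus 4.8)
The plan is to establish the two inclusions separately, using Proposition~\ref{prop:abnstep2} to convert the analytic notion of abnormality into the purely algebraic condition ``$[\Pg,V_1]\neq V_2$''. The forward inclusion extracts a suitable subspace $P$ from an abnormal curve, and the reverse inclusion reconstructs abnormal curves inside the subgroups $\exp(P\oplus[P,P])$ via Chow's theorem.

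For the inclusion ``$\subseteq$'', I would start from an abnormal curve $\ga$ leaving from $e$ and set $P:=\Pg\subseteq V_1$. The first step is to bound $\dim P$: I would show that whenever $\dim\Pg\geq r-1$ one has $[\Pg,V_1]=V_2$, which by Proposition~\ref{prop:abnstep2} contradicts abnormality. This rests on the elementary fact that, since $[V_1,V_1]=V_2$, already a hyperplane $W\subseteq V_1$ satisfies $[W,V_1]=V_2$ (write $V_1=W\oplus\R v$ and expand $[a,b]$ for $a,b\in V_1$). Hence $\dim P\leq r-2$, and $[P,P]\subseteq[\Pg,V_1]\subsetneq V_2$ gives $[P,P]\neq V_2$. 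The second step is the containment $\im\ga\subseteq\exp(P\oplus[P,P])$: writing $\ga(t)=\exp(x(t)+z(t))$ with $x(t)\in V_1$ and $z(t)\in V_2$, the horizontal ODE~\eqref{ODE} forces $\dot x=u$ a.e., so the control $u$ takes values in the closed space $\Pg=P$; thus $\ga$ is a horizontal curve of the sub-Carnot group generated by $P$, whose Lie algebra is exactly $P\oplus[P,P]$, and since $\ga(0)=e$ it never leaves it. This places $\ga(1)$ in the right-hand side.

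For the inclusion ``$\supseteq$'', I would fix an admissible $P$, set $\mathfrak h:=P\oplus[P,P]$ (a subalgebra, since $[\mathfrak h,\mathfrak h]=[P,P]\subseteq V_2$ and $[V_2,\g]=0$) and $H:=\exp(\mathfrak h)$. The point is that $H$ is a sub-Carnot group bracket-generated by its horizontal layer $\pi_{V_1}(P)$, so Chow's theorem joins any $h\in H$ to $e$ by a curve $\ga\subseteq H$ that is horizontal for $G$. It then remains to verify that such a $\ga$ is abnormal: since $\ga\subseteq H$ we have $\Pg\subseteq\pi_{V_1}(P)$, whence $[\Pg,V_1]\subseteq[\pi_{V_1}(P),V_1]$, and Proposition~\ref{prop:abnstep2} yields abnormality as soon as this space is $\neq V_2$. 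Running over all admissible $P$ then realizes every point of the right-hand side as $\ga(1)$ for an abnormal $\ga$.

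The main obstacle, and the step demanding real care, is precisely this abnormality check in the reverse inclusion. The delicate issue is that a curve confined to $H$ may explore enough of $\pi_{V_1}(P)$ that its bracket with the \emph{full} layer $V_1$ already fills $V_2$, even when $[P,P]$ is small; this is exactly what happens for a Lagrangian subspace $P$ in a Heisenberg group, where $[P,P]=0$ yet $[P,V_1]=V_2$ and no nonconstant horizontal curve is abnormal. I would therefore phrase the governing condition in terms of $[\pi_{V_1}(P),V_1]$ rather than $[P,P]$, and reconcile this with the statement in the cases being used. The hyperplane computation from the forward inclusion is the structural fact that makes both directions dovetail, so I expect the proof to hinge on handling that bracket condition cleanly.
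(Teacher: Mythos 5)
Your ``$\subseteq$'' direction is complete and coincides with the paper's argument: the paper likewise takes $P:=\Pg$, rules out $\dim\Pg\in\{r,r-1\}$ by the same hyperplane observation (if $V_1=W\oplus\R v$, then $V_2=[V_1,V_1]\subseteq [W,W]+[v,W]\subseteq [W,V_1]$), and uses ${\rm Im}\,\ga\subseteq\exp(\Pg\oplus[\Pg,\Pg])$. For ``$\supseteq$'' the paper follows exactly the route you sketch --- Chow's theorem inside $H:=\exp(P\oplus[P,P])$ --- and then settles the abnormality check in a single line: ``such a $\ga$ must be abnormal by Proposition~\ref{prop:abnstep2}''. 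That line is precisely the non sequitur you isolated: Proposition~\ref{prop:abnstep2} requires $[\Pg,V_1]\neq V_2$, whereas the hypothesis on $P$ only controls $[P,P]$, and confinement of $\ga$ to $H$ only yields $\Pg\subseteq\pi_{V_1}(P)$, which bounds $[\Pg,\Pg]$ but says nothing about $[\Pg,V_1]$.

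So the obstacle you declined to wave through is not a defect of your attempt but an error in the proposition itself, and your Heisenberg example refutes the stated equality: in $H^n$ with $n\geq 2$, the subspace $P=\Span\{X_1,X_2\}$ satisfies $\dim P=2\leq r-2$ and $[P,P]=\{0\}\neq V_2$, so $\exp(P)$ lies in the right-hand side, whereas $\Abn_{H^n}=\{e\}$ by Example~\ref{ex:Heisenberg}. Only the inclusion ``$\subseteq$'' holds in general, and your proposed repair is the correct one: for $P\subseteq V_1$ with $[P,V_1]\neq V_2$ (a condition which, by the hyperplane fact, already forces $\dim P\leq r-2$), every horizontal curve in $\exp(P\oplus[P,P])$ has $[\Pg,V_1]\subseteq[P,V_1]\neq V_2$, so both inclusions go through and equality is restored. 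The error is harmless downstream: Sections~\ref{freeminus} and~\ref{4+3} invoke Proposition~\ref{prop:assumiamor-2} only through the inclusion ``$\subseteq$'', i.e.\ through the bound $\dim\Pg\leq r-2$, and in the free case underlying Theorem~\ref{thm:free_step_two} the two conditions are interchangeable, since there $p:=\dim P\leq r-2$ gives $\dim[P,V_1]=\tfrac{p(p-1)}{2}+p(r-p)<\tfrac{r(r-1)}{2}=\dim V_2$ automatically.
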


An immediate consequence of Proposition \ref{prop:assumiamor-2}, proved in~\cite[Theorem 1.4]{LDMOPV}, is the following result.

\begin{theorem}\label{thm:free_step_two}
Let $G$ be a Carnot group associated with a free Lie algebra of step 2; then $\Abn_G$ is contained in an affine algebraic subvariety of codimension $3$.
\end{theorem}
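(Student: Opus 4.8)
The plan is to turn Proposition~\ref{prop:abnstep2} into a statement about a single bivector and then read off the codimension from classical determinantal geometry. Since $\g$ is free of step $2$, the bracket identifies $V_2$ with $\Lambda^2 V_1$ via $[y,z]\leftrightarrow y\wedge z$; thus $[W,W]=\Lambda^2 W$ for every $W\subseteq V_1$, and a short count shows $[W,V_1]=V_2$ precisely when $\dim W\ge r-1$. I would work in exponential coordinates, writing a point of $G$ as $(x,t)\in V_1\oplus\Lambda^2 V_1$. For an abnormal curve $\ga$ one has $\im\ga\subseteq\exp(\Pg\oplus\Lambda^2\Pg)$ with $\Pg\subseteq V_1$, and by Proposition~\ref{prop:abnstep2} abnormality forces $\dim\Pg\le r-2$; hence every point of $\Abn_G$ lies in some $\exp(W\oplus\Lambda^2 W)$ with $\dim W\le r-2$. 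I would then record the elementary fact that $t\in\Lambda^2 W$ iff the support $S(t)$ of $t$ (the smallest subspace with $t\in\Lambda^2 S(t)$, whose dimension is $\operatorname{rank}(t)$) satisfies $S(t)\subseteq W$. Consequently
\[
(x,t)\in\Abn_G\ \Longrightarrow\ \exists\,W\subseteq V_1,\ \dim W\le r-2,\ S(t)+\R x\subseteq W,
\]
which amounts to $\dim(S(t)+\R x)\le r-2$.

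Next I would rewrite this as a rank condition. As $\dim(S(t)+\R x)$ equals $\operatorname{rank}(t)$ if $x\in S(t)$ and $\operatorname{rank}(t)+1$ otherwise, the inclusion above is equivalent to
\[
\operatorname{rank}(t)\le r-3,\qquad\text{or}\qquad \operatorname{rank}(t)=r-2\ \text{and}\ x\in S(t).
\]
To make this an affine variety I would use the multilinear encodings $\operatorname{rank}(t)\le 2k\iff t^{\wedge(k+1)}=0$ and, when $\operatorname{rank}(t)=2k$, the equivalence $x\in S(t)\iff x\wedge t^{\wedge k}=0$ (valid because $t^{\wedge k}$ spans $\Lambda^{2k}S(t)$). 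This presents $\Abn_G$ as contained in the zero locus $Z$ of explicit polynomial equations in $(x,t)$.

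The codimension of $Z$ is then computed by parity of $r$, the key input being the classical fact that $\{t:\operatorname{rank}(t)\le 2k\}\subseteq\Lambda^2 V_1$ has codimension $\binom{r-2k}{2}$. If $r$ is odd, the stratum $\operatorname{rank}(t)=r-2$ is empty, so $Z=\{\operatorname{rank}(t)\le r-3\}\times V_1$; here $r-2k=3$ and $\codim Z=\binom{3}{2}=3$. If $r=2m$ is even, $Z=\{t^{\wedge m}=0\}\cap\{x\wedge t^{\wedge(m-1)}=0\}$: the first factor is the Pfaffian hypersurface $\{\operatorname{rank}(t)\le r-2\}$ of codimension $\binom{2}{2}=1$, over whose dense stratum $\{\operatorname{rank}(t)=r-2\}$ the fibre $\{x\in S(t)\}$ has codimension $2$ in $V_1$, while the deeper stratum $\{\operatorname{rank}(t)\le r-4\}$ only contributes codimension $\binom{4}{2}=6$; hence $\codim Z=1+2=3$. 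In either case $\Abn_G\subseteq Z$ with $\codim Z=3$.

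The reduction to the bivector $t$ is routine; the delicate part is the codimension bookkeeping. The main obstacle is to establish the codimension $\binom{r-2k}{2}$ of the skew-symmetric rank loci and, in the even case, to verify that imposing $x\in S(t)$ drops the dimension by exactly two over the generic Pfaffian stratum and that no component of $Z$ has codimension smaller than three. Once the determinantal codimension formula and the wedge-power descriptions of $\{x\in S(t)\}$ are in place, the even/odd split is mechanical and the theorem follows.
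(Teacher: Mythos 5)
Your proposal is correct and takes essentially the route the paper intends: the paper deduces Theorem~\ref{thm:free_step_two} directly from Proposition~\ref{prop:assumiamor-2} by citing \cite{LDMOPV}, where, just as you do, $V_2$ is identified with $\Lambda^2 V_1$, the abnormal set is trapped in the locus $\dim\bigl(S(t)+\R x\bigr)\le r-2$, and the codimension is read off from the skew-symmetric rank strata of codimension $\binom{r-2k}{2}$ together with the Pfaffian hypersurface in the even case. Your multilinear encodings $t^{\wedge(k+1)}=0$ and $x\wedge t^{\wedge k}=0$, and the even/odd bookkeeping giving codimension $3$ in both cases, are all sound.
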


We conclude this section by proving two simple results that hold in general Carnot groups.

\begin{proposition}\label{prop:prodotti}
Let $G$ and $H$ be Carnot groups associated with stratified Lie algebras $\g$ and $\mathfrak h$ (respectively). Let $\ga$ be a horizontal curve in the Carnot group $G\times H$ and write $\ga=(\al,\be)$ for unique horizontal curves $\al$ and $\be$ in $G$ and $H$, respectively. Then, $\ga$ is abnormal in $G\times H$ if and only if either $\al$ is abnormal in $G$ or $\be$ is abnormal in $H$; in particular
\[
\Abn_{G\times H}=(\Abn_G\times H)\cup(G\times\Abn_H).
\]
\end{proposition}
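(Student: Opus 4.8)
The plan is to reduce everything to one observation: for the product group the linear space $\IE_\ga$ of \eqref{eq:defIE} splits as a direct sum along the two factors. First I would record the structure of $G\times H$. Writing $V_1\subseteq\g$ and $W_1\subseteq\mathfrak h$ for the two horizontal layers, the Lie algebra of $G\times H$ is $\g\oplus\mathfrak h$ with $[\g,\mathfrak h]=\{0\}$, and its first layer is $V_1\oplus W_1$. Because the two factors commute, the adjoint representation splits,
\[
\Ad_{(g,h)}(X,Y)=(\Ad_g X,\Ad_h Y),\qquad (X,Y)\in\g\oplus\mathfrak h.
\]
I would also note that a horizontal curve $\ga$ in $G\times H$ with $\ga(0)=e$ is precisely a pair $\ga=(\al,\be)$ with $\al$ horizontal in $G$, $\be$ horizontal in $H$, and $\al(0)=\be(0)=e$; this is the unique decomposition asserted in the statement.

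The \emph{key step} is the computation of $\IE_\ga$. Using the splitting of $\Ad$ together with the fact that $V_1\oplus W_1$ contains both $V_1\oplus\{0\}$ and $\{0\}\oplus W_1$, I would prove
\[
\IE_\ga=\Span\{\Ad_{\ga(t)}(V_1\oplus W_1):t\in[0,1]\}=\IE_\al\oplus\IE_\be,
\]
where $\IE_\al\subseteq\g$ and $\IE_\be\subseteq\mathfrak h$ are the spaces associated with $\al$ and $\be$. Indeed $\Ad_{\ga(t)}(X,0)=(\Ad_{\al(t)}X,0)$ and $\Ad_{\ga(t)}(0,Y)=(0,\Ad_{\be(t)}Y)$ generate $\IE_\al\oplus\{0\}$ and $\{0\}\oplus\IE_\be$ respectively, while a general $\Ad_{\ga(t)}(X,Y)$ is the sum of these two, so no further vectors appear.

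With this identity the first assertion is immediate. By Proposition \ref{primo-lemma} and the discussion following it, $\ga$ is abnormal if and only if $\IE_\ga\neq\g\oplus\mathfrak h$. Since $\IE_\al\subseteq\g$ and $\IE_\be\subseteq\mathfrak h$, a dimension count shows that the direct sum $\IE_\al\oplus\IE_\be$ fills $\g\oplus\mathfrak h$ exactly when $\IE_\al=\g$ and $\IE_\be=\mathfrak h$; hence it is a proper subspace precisely when $\IE_\al\neq\g$ or $\IE_\be\neq\mathfrak h$, that is, when $\al$ is abnormal in $G$ or $\be$ is abnormal in $H$.

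Finally I would deduce the set-theoretic equality. The inclusion $\Abn_{G\times H}\subseteq(\Abn_G\times H)\cup(G\times\Abn_H)$ is immediate: if $\ga=(\al,\be)$ is abnormal, then either $\al$ is abnormal, whence $\al(1)\in\Abn_G$ and $\ga(1)=(\al(1),\be(1))\in\Abn_G\times H$, or symmetrically $\ga(1)\in G\times\Abn_H$. For the reverse inclusion, given $(g,h)\in\Abn_G\times H$ I would take an abnormal curve $\al$ in $G$ from $e$ to $g$ and, invoking the Chow connectivity theorem, an arbitrary horizontal curve $\be$ in $H$ from $e$ to $h$; then $\ga=(\al,\be)$ is horizontal from $e$ to $(g,h)$ and abnormal by the first part, so $(g,h)\in\Abn_{G\times H}$, and symmetrically for $G\times\Abn_H$. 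The one point needing care, and the only mild obstacle, is this last step: one must guarantee that an arbitrary endpoint in the non-abnormal factor is reachable by a horizontal curve, which is exactly what Chow connectivity provides. The splitting of $\Ad$ does the rest, and no quantitative estimate is required.
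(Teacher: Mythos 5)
Your proof is correct and follows essentially the same route as the paper: both arguments reduce to the observation that the differential of the endpoint map splits along the two factors --- the paper states this directly as $\dd \End_{(u,v)}^{G\times H}=\dd \End_u^G \otimes \dd \End_v^H$, while you derive the equivalent identity $\IE_\ga=\IE_\al\oplus\IE_\be$ from Proposition~\ref{primo-lemma} and the componentwise splitting of $\Ad$. The only difference is one of explicitness: you spell out, via Chow connectivity, the final set-theoretic step (reaching an arbitrary endpoint in the non-abnormal factor) that the paper compresses into ``this is enough to conclude,'' and you handle it correctly.
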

\begin{proof}
Let $V_1$ and $W_1$ be the first layers in the stratifications of $\g$ and $\mathfrak h$ respectively. If $u\in L^2([0,1],V_1)$ and $w\in L^2([0,1],W_1)$ are the  controls associated to $\al$ and $\be$ respectively, then $(u,v)\in L^2([0,1],V_1\times W_1)$ is a control of $\ga$ and 
\[
\dd \End_{(u,v)}^{G\times H}=\dd \End_u^G \otimes \dd \End_v^H.
\]
In particular, $\dd \End_{(u,v)}^{G\times H}$ is surjective if and only if both $\dd \End_u^G$ and $\dd \End_v^H$ are, and this is enough to conclude.
\end{proof}

We need some terminology before stating our next result. Assume that $\mathfrak f=V_1\oplus\dots\oplus V_s$ is a nilpotent stratified Lie algebra and that $\mathfrak I\subset V_2\oplus\dots\oplus V_s$ is an ideal of $\mathfrak f$; consider   the quotient Lie algebra $\g=\mathfrak f/\mathfrak I$. Let $\pi:\mathfrak f\to\g$ be the associated canonical projection and let $F,G$ be the Carnot groups associated with $\mathfrak f, \g$ respectively; we use  the same symbol $\pi:F\to G$ to denote the canonical projection at the group level. It is well-known that any horizontal curve $\ga$ in $G$ leaving from the identity (of $G$) admits a unique {\em lift} to $F$, i.e., a unique horizontal curve $\bar\ga$ in $F$ leaving from the identity (of $F$) and such that $\ga=\pi\circ\bar\ga$. Moreover, essentially by definition (see \eqref{eq:defIE}) we have that for any horizontal curve $c$ in $F$
\begin{equation}\label{eq:proiezIE}
\IE_{\pi\circ c}=\pi(\IE_c).
\end{equation}
This proves the following result.

\begin{proposition}\label{prop:omomorfismo}
Let $\mathfrak f=V_1\oplus\dots\oplus V_s$ be a nilpotent stratified Lie algebra, let $\mathfrak I\subset V_2\oplus\dots\oplus V_s$ be an ideal of $\mathfrak f$ and let $\g=\mathfrak f/\mathfrak I$ be the quotient Lie algebra. Let $\pi:F\to G$ be the canonical projection between the Carnot groups $F$ and $G$ associated with $\mathfrak f$ and $\g$, respectively. Then, for any abnormal curve $\ga$ in $G$, the lift $\bar\ga$ is an abnormal curve in $F$; in particular,
\[
\Abn_G\subset\pi(\Abn_F).
\]
\end{proposition}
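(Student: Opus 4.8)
The plan is to reduce the proposition to the algebraic criterion for abnormality recorded right after Proposition~\ref{primo-lemma} --- a horizontal curve leaving from the identity is abnormal precisely when its space $\IE$ is a \emph{proper} subspace of the ambient Lie algebra --- combined with the naturality relation~\eqref{eq:proiezIE}. First I would fix an abnormal curve $\ga$ in $G$ with $\ga(0)=e$ and pass to the unique lift $\bar\ga$ in $F$ recalled just above, so that $\ga=\pi\circ\bar\ga$. Specialising~\eqref{eq:proiezIE} to $c=\bar\ga$ gives
\[
\IE_\ga=\IE_{\pi\circ\bar\ga}=\pi(\IE_{\bar\ga}).
\]
Because $\ga$ is abnormal we have $\IE_\ga\neq\g$, while $\pi:\mathfrak f\to\g$ is surjective; hence $\IE_{\bar\ga}=\mathfrak f$ would force $\IE_\ga=\pi(\mathfrak f)=\g$, a contradiction. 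Therefore $\IE_{\bar\ga}\subsetneq\mathfrak f$, i.e.\ $\bar\ga$ is abnormal in $F$. The stated inclusion then drops out by evaluating at $t=1$: every $g\in\Abn_G$ equals $\ga(1)$ for some abnormal $\ga$, and $g=\pi(\bar\ga(1))\in\pi(\Abn_F)$ since $\bar\ga(1)\in\Abn_F$.

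The only substantive ingredient is the identity~\eqref{eq:proiezIE}, which is already available but whose short justification I would spell out from the definition~\eqref{eq:defIE}. Since $\pi$ is a homomorphism of Lie groups one has the naturality of the adjoint representation, $\pi\circ\Ad_f=\Ad_{\pi(f)}\circ\pi$ for all $f\in F$ (in the present step~$2$ setting this is immediate from~\eqref{eq:Adstep2} together with the fact that $\pi$ preserves brackets), while $\pi$ carries the horizontal layer $V_1$ of $\mathfrak f$ onto the horizontal layer of $\g$ (the hypothesis $\mathfrak I\subset V_2\oplus\dots\oplus V_s$ guarantees that $\pi$ restricts to an isomorphism there). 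Applying $\pi$ to $\IE_{\bar\ga}=\Span\{\Ad_{\bar\ga(t)}V_1: t\in[0,1]\}$ and moving it inside the span therefore yields $\Span\{\Ad_{\ga(t)}\pi(V_1): t\in[0,1]\}=\IE_\ga$, which is exactly~\eqref{eq:proiezIE}.

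I do not expect a genuine obstacle: once~\eqref{eq:proiezIE} and the lift are in hand the argument is a two-line deduction. The point that deserves to be flagged, rather than a difficulty, is that the implication runs in one direction only. A proper subspace $\IE_{\bar\ga}\subsetneq\mathfrak f$ may still satisfy $\IE_{\bar\ga}+\mathfrak I=\mathfrak f$ and hence project onto all of $\g=\mathfrak f/\mathfrak I$, so an abnormal lift $\bar\ga$ can have a non-abnormal projection $\ga=\pi\circ\bar\ga$. This is precisely why one obtains the inclusion $\Abn_G\subset\pi(\Abn_F)$ and not an equality, and it is this gap --- the difference between $\pi(\Abn_F)$ and $\Abn_G$ --- that the later sections must control in order to bound the codimension of $\Abn_G$.
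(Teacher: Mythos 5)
Your proof is correct and takes essentially the same route as the paper, which derives Proposition~\ref{prop:omomorfismo} directly from the unique horizontal lift together with the identity~\eqref{eq:proiezIE}, observing that surjectivity of $\pi$ forces $\IE_{\bar\ga}\subsetneq\mathfrak f$ whenever $\IE_\ga\subsetneq\g$. Your verification of~\eqref{eq:proiezIE} via the naturality $\pi\circ\Ad_f=\Ad_{\pi(f)}\circ\pi$ and the fact that $\pi$ maps $V_1$ onto the first layer of $\g$ simply spells out what the paper dismisses as ``essentially by definition,'' and your closing remark on why the inclusion $\Abn_G\subset\pi(\Abn_F)$ is one-directional matches how the result is used in the sequel.
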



\section{Step $2$, rank $r$, dimensions $r+1$ and $r+2$}\label{plus1and2}
In this section we show that if $G$ is a Carnot group of step $2$ such that $\dim V_1=r$ and $\dim V_2 =1$ or $2$, then the abnormal set $\Abn$ has codimension  at least $3$. 
The case where $\dim V_2 =1$ is somewhat elementary.
Given an integer $\ell\geq 0$, we say that a vector $X\in V_1$ has rank $\ell$ if ${\rm rank}(\ad X)=\ell$. We denote by $R_\ell$ the set of vectors in $V_1$ of rank at most $\ell$; notice that $R_0={\mathfrak z}(\mathfrak g)\cap V_1$.

\begin{theorem}\label{plusone}
Let $G$ be the Carnot group associated with a stratified Lie algebra $\g=V_1\oplus V_2$ such that $\dim V_1 =r$ and $\dim V_2 =1$. 
 Then
$\dim \Abn_G \leq r-2$.
\end{theorem}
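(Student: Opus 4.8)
The plan is to combine Proposition~\ref{prop:abnstep2} with the hypothesis $\dim V_2=1$ to confine every abnormal curve to a single low-dimensional subgroup, and then to bound the dimension of that subgroup. First I would exploit that $V_2$ is a line: for a horizontal curve $\ga$ leaving from $e$, the space $[\Pg,V_1]$ is a subspace of $V_2$ and hence equals either $\{0\}$ or $V_2$. By Proposition~\ref{prop:abnstep2}, $\ga$ is abnormal exactly when $[\Pg,V_1]\neq V_2$, i.e. exactly when $[\Pg,V_1]=\{0\}$. Since every element of $V_1$ already commutes with $V_2$ in step $2$, the condition $[\Pg,V_1]=\{0\}$ is equivalent to $\Pg\subseteq R_0={\mathfrak z}(\g)\cap V_1$.

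Next I would locate where such a $\ga$ can travel. As recorded in the discussion preceding Proposition~\ref{prop:assumiamor-2}, $\im\ga\subseteq\exp(\Pg\oplus[\Pg,\Pg])$. But $\Pg\subseteq R_0$ is central, so $[\Pg,\Pg]=\{0\}$ and hence $\im\ga\subseteq\exp(\Pg)\subseteq\exp(R_0)$. Letting $\ga$ range over all abnormal curves gives
\[
\Abn_G\subseteq\exp(R_0).
\]
Since $\exp$ is a diffeomorphism, it remains only to estimate $\dim R_0$.

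The key point is the inequality $\dim R_0\leq r-2$, which I would prove by contradiction. If $\dim R_0\geq r-1$, then $V_1=R_0+\R X_0$ for a single vector $X_0$ (possibly $V_1=R_0$); as $R_0$ is central, every bracket of a spanning set of $V_1$ vanishes, forcing $V_2=[V_1,V_1]=\{0\}$ and contradicting $\dim V_2=1$. Therefore $\dim R_0\leq r-2$, and combining with the previous step,
\[
\dim\Abn_G\leq\dim\exp(R_0)=\dim R_0\leq r-2.
\]

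I expect the entire difficulty to be concentrated in the first step, namely the translation of abnormality into the inclusion $\Pg\subseteq R_0$; once this is in place the dimension count is immediate, in agreement with the paper's remark that this case is elementary. The only subtlety worth flagging is the localization $\im\ga\subseteq\exp(\Pg\oplus[\Pg,\Pg])$, but this is precisely the observation already used to derive Proposition~\ref{prop:assumiamor-2}, so no new work is needed there.
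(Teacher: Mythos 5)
Your proposal is correct and takes essentially the same route as the paper's proof: both use Proposition~\ref{prop:abnstep2} to reduce abnormality to $[\Pg,V_1]=\{0\}$, i.e.\ $\Pg\subseteq R_0=\mathfrak z(\g)\cap V_1$, and then bound $\dim R_0\leq r-2$ since otherwise $\g$ would be abelian. Your explicit contradiction argument for $\dim R_0\leq r-2$ and the localization $\mathrm{Im}\,\ga\subseteq\exp(\Pg\oplus[\Pg,\Pg])$ merely spell out steps the paper leaves implicit.
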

\begin{proof}
By Proposition \ref{prop:abnstep2},    a curve $\gamma$ is abnormal if and only if
$[\Pg, V_1]=\{0\}$, i.e., 
 $\Pg\subseteq {\mathfrak z}(\mathfrak g)$, the center of $\g$. So the abnormal curves are contained in $\exp R_0$, which is a subgroup of dimension at most $r-2$, for otherwise $\g$ would be abelian. 
\end{proof}

We also provide an alternative proof suggested to us by E. Le Donne.

\begin{proof}[Alternative proof of Theorem \ref{plusone}]
It is an  exercise left to the reader to show that $G$ is isomorphic to the product $H^k\times\R^{h}$ for suitable integers $k\geq 1$ and $h\geq 0$. The conclusion follows from  Proposition \ref{prop:prodotti} and Examples \ref{ex:Rn} and \ref{ex:Heisenberg} .
\end{proof}

The proof of the case $\dim V_2 =2$ is less straightforward, and 
it requires some preliminary results.

\begin{lemma}\label{lemma:rank1codim}
Let $\g=V_1\oplus V_2$ be a step two stratified Lie algebra with $\dim V_1 =r\geq 3$ and $\dim V_2 =2$. 
Then $R_1$ has codimension  at least $1$ in $V_1$.
\end{lemma}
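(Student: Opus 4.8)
The plan is to translate the rank condition into a statement about two skew-symmetric bilinear forms and then to show that $R_1$ is a \emph{proper} algebraic subvariety of $V_1$. First I would observe that for $X\in V_1$ the operator $\ad X$ kills $V_2$ (because $[V_2,V_1]=\{0\}$) and sends $V_1$ into $V_2$; hence ${\rm rank}(\ad X)={\rm rank}(\ad X|_{V_1})$, and since $\dim V_2=2$ this is always at most $2$. Fixing a basis $f_1,f_2$ of $V_2$, I write $[X,Y]=\omega_1(X,Y)f_1+\omega_2(X,Y)f_2$ with $\omega_1,\omega_2\in\Lambda^2 V_1^*$, and let $A_1,A_2$ be the associated skew-symmetric matrices, so that (after identifying $V_1^*$ with $V_1$) the map $\ad X|_{V_1}$ is represented by the matrix with rows $A_1X,A_2X$. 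The stratification hypothesis $V_2=[V_1,V_1]$ is precisely the statement that $\omega_1,\omega_2$, equivalently $A_1,A_2$, are linearly independent: otherwise a nonzero combination $a\omega_1+b\omega_2$ would vanish and $[V_1,V_1]$ would be confined to a hyperplane of $V_2$.

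With this set-up, $R_1=\{X\in V_1:\ A_1X\text{ and }A_2X\text{ are linearly dependent}\}$ is the common zero locus of the $2\times 2$ minors of the $r\times 2$ matrix with columns $A_1X,A_2X$; these minors are quadratic polynomials in $X$, so $R_1$ is an algebraic subvariety of $V_1$. Consequently, to obtain codimension at least $1$ it suffices to produce a single $X$ with ${\rm rank}(\ad X)=2$, that is, to prove $R_1\neq V_1$.

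The core of the argument is therefore the claim: \emph{if $A_1X$ and $A_2X$ are linearly dependent for every $X\in V_1$, then $A_2=cA_1$ for some scalar $c$}, contradicting independence. To prove it I would work on the open set $U=\{X:\ A_1X\neq 0\}$, which is dense and connected since its complement $\ker A_1$ has codimension ${\rm rank}(A_1)\geq 2$ ($A_1$ being a nonzero skew-symmetric matrix). On $U$ the hypothesis gives a well-defined scalar $\lambda(X)$ with $A_2X=\lambda(X)A_1X$. Writing this relation for $X$, $Y$ and $X+Y$ and subtracting yields $(\lambda(X+Y)-\lambda(X))A_1X+(\lambda(X+Y)-\lambda(Y))A_1Y=0$, so that $\lambda(X)=\lambda(Y)$ whenever $A_1X$ and $A_1Y$ are independent. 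Because ${\rm rank}(A_1)\geq 2$, any two points of $U$ can be compared through a third vector $Y$ with $A_1Y$ independent from both (the vectors failing this lie in a union of two proper subspaces), which forces $\lambda$ to be a constant $c$; then $A_2X=cA_1X$ on the dense set $U$, hence $A_2=cA_1$, the desired contradiction.

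Finally I would conclude that, being a proper algebraic subvariety of $V_1$, the set $R_1$ is contained in the zero set of a nontrivial polynomial and therefore has codimension at least $1$. The one delicate point, and the main obstacle, is the global constancy of the proportionality factor $\lambda$; everything else is bookkeeping. As a sanity check, the bound is sharp for $r=3$: identifying skew matrices with vectors $v_1,v_2\in\R^3$ via $A_iX=v_i\times X$ gives $R_1=\Span(v_1,v_2)$, exactly a hyperplane.
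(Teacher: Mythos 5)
Your proof is correct, and its key step takes a genuinely different route from the paper's. Both arguments begin identically: $R_1$ is cut out by the $2\times 2$ minors of a matrix whose entries are linear in $X$, so it is an algebraic subvariety and the whole content is showing $R_1\neq V_1$. The paper does this constructively: assuming every vector has rank at most $1$, it fixes $X_1,X_2$ with $[X_1,X_2]=Z_1\neq 0$, uses the rank hypothesis (which forces ${\rm Im}(\ad X_1),{\rm Im}(\ad X_2)\subseteq\R Z_1$) to adapt the basis so that $[X_j,X_1]=[X_j,X_2]=0$ for $j\geq 3$, extracts $X_3,X_4$ with $[X_3,X_4]$ independent of $Z_1$, and exhibits the explicit rank-$2$ vector $X_1+X_3$, the case $r=3$ being dispatched separately as trivial. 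You instead run a pencil argument on the pair of skew-symmetric forms: pointwise dependence of $A_1X$ and $A_2X$ defines a proportionality factor $\lambda(X)$ on the set $\{A_1X\neq 0\}$, and your third-vector comparison --- valid because a nonzero skew-symmetric matrix has even rank, hence ${\rm rank}(A_1)\geq 2$, so any two points can be compared through a $Y$ avoiding two proper subspaces --- makes $\lambda$ constant, giving $A_2=cA_1$ and contradicting the linear independence of $A_1,A_2$, which is exactly the stratification hypothesis $[V_1,V_1]=V_2$. Your identification of ${\rm rank}(\ad X)$ with the dimension of ${\rm span}\{A_1X,A_2X\}$ is also correct. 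What the paper's construction buys is an explicit rank-$2$ element together with basis normalizations that are recycled (via the footnote) in the proof of Theorem~\ref{plustwo}; what yours buys is uniformity in $r$ (no case split at $r=3$) and a clean, reusable linear-algebra statement: two skew-symmetric forms whose associated maps are everywhere pointwise dependent are globally proportional. One tiny remark: the connectedness of $U$ you invoke is never actually needed, since your comparison argument yields constancy directly, and continuity then extends $A_2X=cA_1X$ from the dense set $U$ to all of $V_1$, as you say.
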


\begin{proof}
Let $X_1,\dots,X_r$ be a basis of $V_1$ and $Z_1,Z_2$ a basis of $V_2$; let $X=\sum_{i=1}^r x_i X_i\in V_1$. Asking $X$ to have rank at most 1 amounts to an algebraic condition on $x_1,\dots,x_r$: in fact, $X$ has rank at most 1 if and only if all the $2\times2$ minors of the matrix $M\in\R^{2\times r}$ defined by $[X_j,X]=M_{1j}Z_1 + M_{2j}Z_2$, $j=1,\dots,r$  have null determinant. We notice that the entries of $M$ are linear in $x_1,\dots,x_r$, hence all the determinants of the $2\times2$ minors are given by a (possibly zero) homogeneous second-order polynomial in $x_1,\dots,x_r$. It follows that $R_1$ is contained in an algebraic variety of $V_1$.

In order to show that this variety is not the whole $V_1$ (and hence it has positive codimension, as desired) it is enough to find a vector in $V_1$ of rank $2$. This is trivial if $r=3$. For $r\geq 4$ we reason by contradiction and assume that all vectors have rank 0 or 1. Fix $X_1,X_2\in V_1$  such that $[X_1,X_2]=Z_1\neq 0$.  We can choose\footnote{{Otherwise $[X_j,X_1]=aZ_1$ and $[X_j,X_2]=bZ_1$ and one could replace $X_j$ with $X_j-bX_1+aX_2$.}\label{foot:banale}} a basis $\{X_1,X_2,X_3,\dots,X_r\}$  of $V_1$ so that $[X_j,X_1]=[X_j,X_2]=0$ for every $j=3,\dots,r$. Since $\dim V_2=2$, we may assume that $Z_1$ and $Z_2:=[X_3,X_4]$ are linearly independent. Then $X_1+X_3$ has rank $2$, and the proof is accomplished.
\end{proof}

\begin{theorem}\label{plustwo}
Let $G$ be the Carnot group associated with a stratified Lie algebra $\g=V_1\oplus V_2$ such that $\dim V_1 =r$ and $\dim V_2 =2$. Then
$\dim \Abn_G\leq r-1$.
\end{theorem}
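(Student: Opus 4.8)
The plan is to bound $\dim\Abn_G$ by analyzing the linear subspaces $P$ appearing in the characterization of Proposition~\ref{prop:assumiamor-2}: since $\Abn_G=\bigcup\{\exp(P\oplus[P,P]): \dim P\le r-2,\ [P,P]\ne V_2\}$, I would estimate the dimension of this union according to the value of $[P,P]$. Because $\dim V_2=2$, for an abnormal curve $\ga$ the defining condition $[\Pg,V_1]\ne V_2$ means $\dim[\Pg,V_1]\in\{0,1\}$. I would split into the two cases $[\Pg,V_1]=\{0\}$ and $\dim[\Pg,V_1]=1$ and show each contributes a piece of $\Abn_G$ of dimension at most $r-1$.

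In the case $[\Pg,V_1]=\{0\}$, exactly as in the proof of Theorem~\ref{plusone}, the curve lies in $\exp R_0$ where $R_0=\mathfrak z(\g)\cap V_1$; since $\g$ is not abelian, $R_0$ has codimension at least $2$ in $V_1$, so this part has dimension at most $r-2$. The main work is the case $\dim[\Pg,V_1]=1$, i.e.\ $[\Pg,V_1]$ is a line $L\subseteq V_2$. Here I would argue that $\Pg$ must lie in $R_1$ (every vector $X\in\Pg$ satisfies $[X,V_1]\subseteq[\Pg,V_1]=L$, hence $\operatorname{rank}(\ad X)\le 1$). By Lemma~\ref{lemma:rank1codim}, $R_1$ has codimension at least $1$ in $V_1$, so $\dim R_1\le r-1$. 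The abnormal curves of this type have image in $\exp(\Pg\oplus[\Pg,\Pg])$ with $\Pg\subseteq R_1$ and $[\Pg,\Pg]\subseteq[\Pg,V_1]=L$ of dimension at most $1$; thus they fill at most $\exp(R_1\oplus L)$-type sets, whose total dimension I expect to control by $(r-1)+1=r$, which is not yet good enough.

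The difficulty is that a naive count gives $r$ rather than $r-1$, so the heart of the argument is to gain one more dimension. The key observation I would exploit is that the line $L=[\Pg,V_1]$ is \emph{determined} by the first-layer part $\Pg$ (it equals the span of all brackets $[\Pg,V_1]$), so the $V_2$-coordinate of a point of $\Abn_G$ is not free but constrained to lie in a one-dimensional space depending algebraically on the projection to $V_1$. More precisely, I would set up a map sending $\Pg\subseteq R_1$ (equivalently, its generic generator $X\in R_1$) to the associated line $L_X:=[X,V_1]\subseteq V_2$, and describe $\Abn_G$ as (essentially) the image of the incidence set $\{(X,Z): X\in R_1,\ Z\in L_X\}$ under $\exp$. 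Since $R_1$ has dimension $\le r-1$ and the fibre $L_X$ is a line but $Z$ ranges over the \emph{same} line $L_X$ as the bracket directions already accounted for, the correct bookkeeping should show the $V_2$-direction does not add a genuinely independent dimension beyond what $R_1$ contributes—yielding $\dim\Abn_G\le r-1$. The main obstacle is making this last dimension count rigorous: I would need to verify that the assignment $X\mapsto L_X$ is sufficiently regular (generically constant rank) on $R_1$ and that the resulting fibration genuinely has total space of dimension $r-1$, handling separately any degenerate stratum of $R_1$ where $L_X$ collapses or jumps. Treating these strata, and confirming the algebraic-variety structure so that the dimension bound transfers to a codimension-$3$ subvariety of $G$ (which has dimension $r+2$), is where the careful analysis of the bases of $V_1$ alluded to in the introduction becomes essential.
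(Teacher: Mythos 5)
Your reduction is sound up to a point: the case $[\Pg,V_1]=\{0\}$ gives $\exp(R_0)$ of dimension at most $r-2$, the observation that $\dim[\Pg,V_1]=1$ forces $\Pg\subseteq R_1$ is correct, and you have honestly located the difficulty in the naive count $\dim R_1+1\le r$. But the idea you offer to gain the missing dimension does not work as stated, and it is precisely the heart of the theorem. The line $L_X=[X,V_1]$ lies in $V_2$, while $R_1\subseteq V_1$: the fibre direction is \emph{transverse} to the base, so it is a genuinely independent dimension, and the assertion that ``$Z$ ranges over the same line $L_X$ as the bracket directions already accounted for'' is not a reason for the count to drop --- the bracket directions constrain \emph{which} points of $V_1\oplus V_2$ occur, not the dimension of the incidence set. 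Note also that for $\dim\Pg=1$ there is no $V_2$-component at all (then $[\Pg,\Pg]=\{0\}$ and the endpoints lie in $\exp(R_1)$); the dangerous stratum is $\dim\Pg\ge2$ with $[\Pg,\Pg]\neq\{0\}$. For such curves, setting $\ell:=[\Pg,V_1]$, one has $\Pg\subseteq\mathfrak m_\ell:=\{X\in V_1:[X,V_1]\subseteq\ell\}$ and endpoints in $\exp(\mathfrak m_\ell\oplus\ell)$; what must be proved --- and what your proposal leaves as a hope (``the correct bookkeeping should show'') --- is that \emph{nonabelian} $\mathfrak m_\ell$ can occur only for a zero-dimensional family of lines $\ell$, equivalently that the vectors lying in some nonabelian plane $P$ with $\dim[P,V_1]=1$ fill a set of dimension at most $r-2$. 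Without that lemma the bound stalls at $r$, one more than the theorem claims.

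This missing step is exactly where the paper does its real work, by a quite different mechanism: induction on $r$ together with a structural dichotomy. From a nonabelian $\Pg$ it extracts a Heisenberg pair $[X_1,X_2]=Z_1$ and an adapted basis with $[X_j,X_1]=[X_j,X_2]=0$ for $j\ge3$; then either $\g$ splits as a Lie algebra direct sum $\mathfrak h\oplus\tilde{\mathfrak g}$ with $\mathfrak h$ Heisenberg, and one concludes via Proposition \ref{prop:prodotti}, Theorem \ref{plusone} and Example \ref{ex:Heisenberg}, or $\g$ is a quotient $(\mathfrak h\oplus\g')/\mathfrak I$ and abnormal curves are analysed through their lifts using Proposition \ref{prop:omomorfismo} and \eqref{eq:proiezIE}. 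For what it is worth, your route \emph{can} be completed, but by an argument you do not supply: view the bracket as a pencil of skew-symmetric forms, so that $\mathfrak m_\ell$ is the radical of the pencil member $\omega_\ell$ (no member vanishes, since $\dim V_2=2$). If the generic member has rank at least $4$, the generic fibres have dimension at most $r-4$ and the count closes, with finitely many exceptional lines contributing at most $r-1$ each; if all members have rank $2$, differentiating the identity $\omega_t(v(t),\cdot)=0$ along smooth sections $v(t),w(t)$ of the radicals yields $\omega_1(v,w)=\omega_2(v,w)=0$, so every $\mathfrak m_\ell$ is abelian and everything lands in $\exp(R_1)$. Some such argument is indispensable: as written, your proposal asserts the conclusion of the key lemma rather than proving it.
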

\begin{proof}
We argue by induction on $r$. Since $\dim V_2 =2$, the smallest possible dimension for $V_1$ is $r=3$. If this is the case, then for every (non-constant) abnormal curve $\gamma$ the space $\Pg$ has dimension one (otherwise $[\Pg,V_1]=V_2$ and $\gamma$ would not be abnormal). Therefore $\gamma$ is a horizontal line, and $\Pg=\Span \{X^\ga\}$ for some $X^\ga \in V_1$; actually, $X^\ga\in R_1$, for otherwise $[\Pg,V_1]=V_2$. It follows that $\Abn_G\subset \exp(R_1)$; by Lemma~\ref{lemma:rank1codim}, this has dimension at most $2$, as stated. 
 
 Assume now that the thesis is true for $\dim V_1\leq r-1$ and let $\gamma$ be an abnormal curve. Notice that if $\dim \Pg =1$, then $\gamma\subseteq \exp (R_1)$. Therefore, if there is no abnormal curve $\gamma$ such that $\dim \Pg\geq 2$, the conclusion follows from Lemma~\ref{lemma:rank1codim}. Otherwise there is a curve $\ga$ which is abnormal (i.e., $\dim [\Pg,V_1]\leq 1$) and such that
$\Pg$ has dimension $\geq 2$. If $\Pg$ is abelian for every such $\ga$, then the abnormal curves are all contained in the space $\exp (R_1)$, which has dimension at most $r-1$ by Lemma~\ref{lemma:rank1codim}.
 
Otherwise, suppose there is an abnormal curve $\ga$ such that $\dim\Pg\geq 2$, $\dim [\Pg,V_1]\leq 1$, and $[\Pg,\Pg]\neq 0$, so that  $\dim [\Pg,\Pg]=\dim[\Pg,V_1]=1$. Then there are $X_1,X_2\in \Pg$ such that $[X_1,X_2]=Z_1\neq 0$. We may complete\footnote{See footnote \ref{foot:banale}.} $X_1$ and $X_2$ to a basis
  $X_1,\dots,X_r$  of $V_1$ so that  
 $[X_j,X_1]=[X_j,X_2]=0$ for every $j=3,\dots,r$. 
 Denoting by $\mathfrak{g}$ the Lie algebra of $G$, we have two cases: either $Z_1$ is in the Lie algebra generated by $X_3,\dots,X_r$, or not.
 In the latter case, we may write  $\mathfrak{g}=\mathfrak{h}\oplus \tilde{\mathfrak{g}}$, a Lie algebra direct sum where $\mathfrak{h}:={\rm span}\{X_1,X_2,Z_1\}$ is the three dimensional Heisenberg Lie algebra, $\tilde{\mathfrak{g}}:={\rm span}\{X_3,\dots,X_r,Z_2\}$ is a subalgebra of $\mathfrak{g}$, and $Z_2$ is a vector in $V_2$ linearly independent from $Z_1$. In particular, by Proposition \ref{prop:prodotti} the abnormal set satisfies 
$$
 \Abn_G\subseteq \left(\Abn_H\times \tilde{G}\right)\cup \left( H\times \Abn_{\tilde{G}}\right)
 $$
 with $H$ and $\tilde{G}$ denoting the subgroups of $G$ with Lie algebras $\mathfrak{h}$ and $\tilde{\mathfrak{g}}$ respectively.  One can then easily conclude using Theorem \ref{plusone} (which applies to $\tilde G$) and Example \ref{ex:Heisenberg}.
 
We are  left with the case where $Z_1$ is in the Lie algebra generated by $X_3,\dots,X_r$.  In this case  $X_3,\dots,X_r$ generate two independent vectors  $Z_1$ and $Z_2$ in $V_2$. Define the Lie algebra $\mathfrak{g}^\prime:={\rm span}\{X_3,\dots,X_r,T,Z_2\}$ with  Lie product given by declaring the map $\phi:\mathfrak{g}\to \mathfrak{g}^\prime$, defined by $\phi(Z_1)=T$ and by the identity on the other basis vectors, to be an isomorphism.  Then we can write 
 $$
 \mathfrak{g}=(\mathfrak h \oplus \g') / \mathfrak I,
 $$
 where  $\mathfrak I:={\rm span}\{T-Z_1\}$ and again $\mathfrak h:={\rm span}\{X_1,X_2,Z_1\}$. 
 The homomorphic surjective projection  $\pi:\mathfrak{h} \oplus \g'\to\g$ defines a natural projection (for which we use the same symbol $\pi$) at the Lie group level 
 $$
 \pi: H\times G' \to G,
 $$
 where $G'$ denotes the exponential group of $\g'$. 
By Proposition \ref{prop:omomorfismo} the abnormal curves $\ga$ in $G$ are projections  of abnormal curves $(\alpha,\beta)$ in $H\times G'$, with $\alpha$ and $\beta$ horizontal curves in $H$ and $G'$ respectively.  Namely, $\gamma=\pi(\alpha,\beta)$. By Proposition \ref{prop:prodotti}, $(\alpha,\beta)$ is abnormal in $H\times G'$ if and only if at least one of $\alpha$ and $\beta$ is abnormal in $H$ and $G'$ respectively. 
So, the abnormal set $\Abn_G$ is contained in ${\rm A}_H \cup {\rm A}_{G'}$, where
$$
{\rm A}_H:=\{\gamma(1) : \gamma=\pi(\alpha,\beta) \text{ abnormal in }G, \gamma(0)=e, \alpha \text{ abnormal in } H\}
$$
and 
$$
{\rm A}_{G'}:=\{\gamma(1) : \gamma=\pi(\alpha,\beta) \text{ abnormal in }G, \gamma(0)=e, \beta \text{ abnormal in } G'\}
$$
We first consider ${\rm A}_H$.
By Example \ref{ex:Heisenberg}, the only abnormal curves in $H$ leaving from $e$ is the constant curve $\bar\alpha\equiv e$; we claim that  $\gamma=\pi(\bar\alpha,\beta)$  is abnormal in $G$ if and only if $\beta$ is abnormal in $G'$. Indeed, if this was not the case, then $\IE_\beta^{G'}=\g'$ and $\IE_{\bar\alpha}^H=V_1^{\mathfrak h}$, where  $V_1^{\mathfrak h}={\rm span}\{X_1,X_2\}$. Recalling \eqref{eq:proiezIE} we would have $\IE_\gamma^G =\pi( \IE_{\bar\alpha}^H\oplus\IE_\beta^{G'})=\g$ and $\gamma$ would not be abnormal in $G$. This proves that $A_H\subseteq\pi(\{e\}\times \Abn_{G'})$ and, in particular, that $A_H$ has codimension at least 5.
 
 Next, we consider ${\rm A}_{G'}$. If $\gamma$ is abnormal in $G$ and $\alpha$ is not constant, then  $\IE_\alpha^H=\mathfrak h$ by Example \ref{ex:Heisenberg}. Hence, in order to have $\IE_\ga^G\neq \g$, it must be that
 $$
\IE^{G'}_\beta \subseteq V_1^{\g'}\oplus \R T,
 $$
 with $V_1^{\g'}={\rm span}\{X_3,\dots,X_r\}$.
 If $\IE^{G'}_\beta=V_1^{\g'}$, then $[P_\beta,V_1^{\g'}]=\{0\}$. So $\beta\subseteq \exp(V_1^{\g'}\cap \mathfrak{z}(\g'))$, with 
 $\mathfrak{z}(\g')$  the center of $\g'$. But $V_1^{\g'}\cap \mathfrak{z}(\g')$ has dimension at most $r-5$, so we reach the conclusion in this case. Otherwise $\beta$ is such that $\IE^{G'}_\beta=V_1^{\g'}\oplus \R T$, which implies that $[P_\beta,V_1^{\g'}]=\R T$. Namely, $\beta \subseteq \exp(M\oplus  \R T)$, where
 $$
 M= \{X\in V_1^{\g'} \,:\, {\rm Im}(\ad X)\subseteq \R T \}.
 $$
Notice that $M$ is a linear subspace of $V_1^{\g'}$ and that $\dim M\leq r-4$. Using the fact that $\pi(T-Z_1)=0$ we obtain
$$
\pi(\alpha,\beta)\subseteq \pi(H\times \exp(M\times\R))=\pi(H\times \exp(M))
$$
and the set on the right hand side has dimension $r-1$. This concludes the proof.

\end{proof}

\section{Step $2$, rank $r$, dimension $\frac{r^2+r}{2}-1$ and $\frac{r^2+r}{2}-2$.}\label{freeminus}
Let $\g=V_1\oplus V_2$ be a step two Lie algebra  with $\dim V_1=r$. Let $\mathfrak f_{r,2}$ be the nilpotent free Lie algebra of rank $r$ and step $2$ and let $F_{r,2}$ be the Carnot group with $\mathfrak f_{r,2}$ as Lie algebra. 
Denote by $W_1\oplus W_2$ a stratification of $\mathfrak f_{r,2}$, and recall that $\dim W_2 = \frac{r(r-1)}{2}$.
Then the Lie algebra $\g$ can be viewed as the quotient of  $\mathfrak f_{r,2}$ by a subspace $W$ of $W_2$. One possible strategy for studying the abnormal set of $G$ is to study those abnormal curves in $F_{r,2}$ that project to abnormal curves on $G$. In the following two sections  we use this idea to study the abnormal set of $G$ when $\dim V_2=\frac{r(r-1)}{2}-1$ (i.e., $\dim W=1$) and $\dim V_2=\frac{r(r-1)}{2}-2$ (i.e., $\dim W=2$). As the dimension of the space $W$  grows, the discussion becomes considerably more complicated and this strategy (apparently) ceases to be convenient.
The following lemma will be useful for both cases
\begin{lemma}\label{lem:insiemeA}
The set $\mathscr A\subseteq F_{r,2}$ defined  by 
\[
\mathscr A:=\bigcup\{\text{Im }\ga:\ga\text{ is a curve in $F_{r,2}$ and }\dim\Pg\leq r-3\}
\]
has dimension $\frac{r^2+r}2 - 6$. 
\end{lemma}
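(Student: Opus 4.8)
The plan is to identify $\mathscr A$ with the image of an explicit incidence variety, compute its dimension by a count, and match the resulting upper bound with a lower bound obtained by verifying that the natural parametrization is generically an immersion. This parallels the analysis behind Theorem~\ref{thm:free_step_two}, now with $\dim P\le r-3$ in place of $\dim P\le r-2$.

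First I would argue, exactly as in Proposition~\ref{prop:assumiamor-2}, that $\mathrm{Im}\,\ga\subseteq\exp(\Pg\oplus[\Pg,\Pg])$ for every curve $\ga$, while conversely the Chow connectivity theorem fills each $\exp(P\oplus[P,P])$ with curves whose associated space lies in $P$. Since $\exp$ is a diffeomorphism this gives
\[
\mathscr A=\exp(\mathscr B),\qquad \mathscr B:=\bigcup\{\,P\oplus[P,P]\;:\;P\subseteq W_1,\ \dim P\le r-3\,\},
\]
and $\dim\mathscr A=\dim\mathscr B$. As the pieces with $\dim P<r-3$ are lower dimensional, I restrict to $\dim P=r-3$. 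Freeness of $\mathfrak f_{r,2}$ gives $\dim[P,P]=\binom{r-3}{2}$ for every such $P$, so $\dim(P\oplus[P,P])=\frac{(r-3)(r-2)}2$ is constant and $P\mapsto P\oplus[P,P]$ defines an algebraic subbundle over the Grassmannian $\mathrm{Gr}(r-3,r)$. Hence the incidence variety $\mathscr I:=\{(P,Y):\dim P=r-3,\ Y\in P\oplus[P,P]\}$ is a smooth irreducible manifold with
\[
\dim\mathscr I=3(r-3)+\tfrac{(r-3)(r-2)}2=\tfrac{r^2+r}2-6 .
\]
The projection $p(P,Y):=Y$ has image $\mathscr B$, so $\dim\mathscr B\le\dim\mathscr I=\frac{r^2+r}2-6$.

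For the opposite inequality I would prove that $dp$ is injective at a generic $(P,Y)$, which makes $p$ an immersion there and forces $\dim\mathscr B=\dim\mathscr I$. Fix a basis $X_1,\dots,X_{r-3}$ of $P$ and a complement $Q=\Span\{q_1,q_2,q_3\}$ of $P$ in $W_1$, and write $Y=Y_h+Y_v$ with $Y_h=\sum_i c_iX_i\in P$ and $Y_v=\sum_{i<j}b_{ij}[X_i,X_j]\in[P,P]$. A vector of $\ker dp$ is a first-order deformation of $P$, i.e.\ a map $A\in\hom(P,Q)$, $A(X_i)=\sum_k a_{ik}q_k$, such that the fixed $Y$ stays in $P(t)\oplus[P(t),P(t)]$ to first order. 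Splitting into $W_1$- and $W_2$-parts yields the horizontal condition $A(Y_h)=0$ and the vertical condition
\[
\sum_{i<j}b_{ij}\big([A(X_i),X_j]+[X_i,A(X_j)]\big)\in[P,P].
\]

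Freeness now does the work. In the splitting $W_2=[P,P]\oplus[Q,P]\oplus[Q,Q]$ the left-hand side above lies in $[Q,P]$, which meets $[P,P]$ only in $0$; thus the vertical condition becomes the equation $\sum_{i<j}b_{ij}([A(X_i),X_j]+[X_i,A(X_j)])=0$. Expanding in the basis $\{[q_k,X_m]\}$ of $[Q,P]$, the coefficient of $[q_k,X_m]$ equals $-(BA)_{mk}$, where $B=(b_{ij})$ is the antisymmetric $(r-3)\times(r-3)$ matrix of coordinates of $Y_v$ and $A=(a_{ik})$; so the vertical condition is $BA=0$, while $A(Y_h)=0$ reads $c^{T}A=0$ with $c=(c_i)$. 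For generic $Y$ the matrix $B$ is a generic antisymmetric matrix and $c$ a generic vector. If $r-3$ is even, $B$ is invertible and $BA=0$ gives $A=0$. If $r-3$ is odd, $\ker B=\R v$ for a single $v\neq0$, so every column of $A$ is a multiple of $v$; then $c^{T}A=0$ together with $c^{T}v\neq0$ (generic $c$) forces $A=0$. In either case $\ker dp=0$, $p$ is a generic immersion, and $\dim\mathscr B=\frac{r^2+r}2-6$, proving the lemma. I expect the vertical first-order condition and its reduction to $BA=0$—where freeness is essential and the parity of $r-3$ must be treated separately—to be the crux; the incidence count and the horizontal condition are routine.
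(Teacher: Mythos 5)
Your proposal is correct, and it in fact proves slightly more than the paper does. The paper's own proof is exactly your upper-bound count: it decomposes $\mathscr A=\bigcup_{h=3}^{r-1}\mathscr A_h$, where $\mathscr A_h$ is the union of $\exp(P\oplus[P,P])$ over $P\in Gr(W_1,r-h)$, and parametrizes each $\mathscr A_h$ locally by a smooth map in $\dim Gr(\R^r,r-h)+\dim F_{r-h,2}$ parameters, obtaining $\frac{r^2+r}{2}-\frac{h(h+1)}{2}\leq\frac{r^2+r}{2}-6$; your incidence bundle $\mathscr I\to Gr(r-3,r)$ with fiber $P\oplus[P,P]$ is precisely the $h=3$ instance of this count, and your reduction to $\dim P=r-3$ is legitimate (cleanest via the remark that $P'\oplus[P',P']\subseteq P\oplus[P,P]$ whenever $P'\subseteq P$, so the union over $\dim P\leq r-3$ already equals the union over $\dim P=r-3$). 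What you add is the lower bound: read literally, the paper's proof only establishes $\dim\mathscr A\leq\frac{r^2+r}{2}-6$, which is all that Theorems \ref{thm:liberomeno1} and \ref{thm:Obiettivo} use, whereas you verify that the bound is attained. Your immersion computation is sound: the splitting $W_2=\Lambda^2P\oplus(Q\wedge P)\oplus\Lambda^2Q$ is exactly where freeness enters, the reduction of $\ker dp$ to $BA=0$ and $c^{T}A=0$ is correct (I checked the coefficient $-(BA)_{mk}$ of $[q_k,X_m]$, using the antisymmetry of $B$), and the parity analysis of the generic antisymmetric $B$ works, including the degenerate case $r-3=1$, where $B=0$ but the horizontal condition $c^{T}A=0$ with generic $c\neq 0$ alone kills $A$; genericity of the pair $(c,B)$ is available because $Y_h$ and $Y_v$ range freely over $P$ and $\Lambda^2P$. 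In short: same skeleton as the paper for the dimension count, plus an extra generic-immersion argument that justifies the lemma's statement ``has dimension'' rather than merely ``has dimension at most'', at the cost of a page of linear algebra the paper's applications do not require.
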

\begin{proof}
Denoting by $Gr(W_1,k)$ the Grassmannian of $k$-planes in $W_1$, we have 
\[
\mathscr A= \bigcup_{h=3}^{r-1}\mathscr A_h,\qquad\text{where}\qquad\mathscr  A_h:=\bigcup_{P\in Gr(W_1,r-h)} \exp(P\oplus[P,P]).
\]
Each set $\mathscr A_h$ can (locally) be  parametrized by a smooth map depending on a number of parameters equal to $\dim Gr(\R^r,r-h)+\dim F_{r-h,2}$, hence it has dimension
\[
h(r-h)+ \frac{(r-h)(r-h+2)}{2} = \frac{r^2+r}2 - \frac{h(h+1)}{2} \leq \frac{r^2+r}2 - 6
\]
because $h\geq 3$.
\end{proof}

\subsection{Dimension $\frac{r^2+r}{2}-1$} We prove the following.

\begin{theorem}\label{thm:liberomeno1}
Let $G$ be the Carnot group associated  with a  step two stratified Lie algebra $\g=V_1\oplus V_2$ with $\dim V_1 =r$ and $\dim V_2 =\frac{r(r-1)}{2}-1$. Then
$\Abn_G$ has codimension at least $3$.
\end{theorem}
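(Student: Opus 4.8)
The plan is to realize $\g$ as a quotient $\mathfrak f_{r,2}/W$ of the free step--two algebra by a one--dimensional subspace $W\subset W_2$, and to combine Proposition~\ref{prop:omomorfismo} with Proposition~\ref{prop:abnstep2}. Throughout I identify $W_2$ with $\wedge^2 W_1$, so that the bracket in $\mathfrak f_{r,2}$ reads $[X,Y]=X\wedge Y$ for $X,Y\in W_1$. The first thing to observe is that the crude inclusion $\Abn_G\subseteq\pi(\Abn_{F_{r,2}})$ does \emph{not} suffice: since $\dim\Abn_{F_{r,2}}=\frac{r^2+r}{2}-3$ by Theorem~\ref{thm:free_step_two} and $\pi$ cannot increase dimension, this only yields codimension $2$ in $G$. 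The missing codimension has to come from the fact that most abnormal curves of $F_{r,2}$ stop being abnormal after projection.

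Next I would translate the abnormality condition to $\mathfrak f_{r,2}$. If $\gamma$ is a horizontal curve in $G$ then $\Pg\subseteq V_1=W_1$, and Proposition~\ref{prop:abnstep2} applied in $G$ shows that $\gamma$ is abnormal if and only if $[\Pg,W_1]+W\neq W_2$, the bracket being computed in $\mathfrak f_{r,2}$. I would then split according to $k:=\dim\Pg\le r-2$. For $k\le r-3$ the images of such curves lie in the set $\mathscr A$ of Lemma~\ref{lem:insiemeA}, so their contribution to $\Abn_G$ is contained in $\pi(\mathscr A)$, whose dimension is at most $\frac{r^2+r}{2}-6$.

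The heart of the matter is the case $k=r-2$. Here $[\Pg,W_1]=\Pg\wedge W_1$ is exactly the kernel of the surjection $\wedge^2 W_1\to\wedge^2(W_1/\Pg)$; as $\dim(W_1/\Pg)=2$, this kernel has codimension exactly $1$ in $W_2$, for \emph{every} such $\Pg$. Hence $[\Pg,W_1]+W\neq W_2$ holds precisely when $W\subseteq[\Pg,W_1]$. Writing $W=\R\omega$ with $\omega\neq0$, this is the single algebraic equation $\wedge^2 q_{\Pg}(\omega)=0$, where $q_{\Pg}\colon W_1\to W_1/\Pg$ is the quotient map; thus $\mathcal G:=\{P\in Gr(W_1,r-2):W\subseteq[P,W_1]\}$ is an algebraic subvariety of the Grassmannian. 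It is moreover proper, since for $\omega\neq0$ one may always choose a two--dimensional quotient on which $\omega$ has nonzero image. Therefore $\dim\mathcal G\le\dim Gr(W_1,r-2)-1=2r-5$.

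Finally I would close with the same parameter count as in Lemma~\ref{lem:insiemeA}. The part of $\Abn_G$ produced by $k=r-2$ is
\[
\pi\Big(\bigcup_{P\in\mathcal G}\exp_F\big(P\oplus[P,P]\big)\Big),
\]
which can be parametrized by at most $\dim\mathcal G+\dim F_{r-2,2}$ parameters; since $\dim F_{r-2,2}=\frac{(r-2)(r-1)}{2}$ and $\pi$ does not raise dimension, this set has dimension at most
\[
(2r-5)+\frac{(r-2)(r-1)}{2}=\frac{r^2+r}{2}-4 .
\]
Combined with the bound on $\pi(\mathscr A)$, this gives $\dim\Abn_G\le\frac{r^2+r}{2}-4=\dim G-3$, which is the assertion. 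I expect the main obstacle to be the careful bookkeeping in the case $k=r-2$: one must recognize that the survival condition $W\subseteq[\Pg,W_1]$ removes exactly one dimension from the Grassmannian of admissible $\Pg$, and that this is precisely the dimension that upgrades the naive codimension $2$ to the required $3$.
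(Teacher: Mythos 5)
Your proposal is correct and follows essentially the same route as the paper's proof: realizing $\g$ as $\mathfrak f_{r,2}/\R Z$, disposing of the curves with $\dim\Pg\le r-3$ via Lemma~\ref{lem:insiemeA}, and showing that for $\dim\Pg=r-2$ the survival condition $W\subseteq[\Pg,W_1]$ is a nontrivial algebraic equation cutting one dimension off $Gr(W_1,r-2)$, leading to the identical count $(2r-5)+\frac{(r-2)(r-1)}{2}=\frac{r^2+r}{2}-4$. Your formulation of that condition as the vanishing of $\wedge^2 q_{\Pg}(\omega)$, with the explicit properness check, is a slightly cleaner packaging of the determinant argument the paper uses, but the substance is the same.
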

\begin{proof}
As we said, the algebra $\g$ is isomorphic to the quotient of  $\mathfrak f_{r,2}$  by a one dimensional subspace of  $W_2$, say $\mathbb R Z$. 
From Proposition~\ref{prop:omomorfismo}, every abnormal curve in $G$ lifts to an abnormal curve in $F_{r,2}$. Denote by $\pi$ the projection of $F_{r,2}$ onto $G$, 
and
let $\gamma$ be an abnormal curve in $F_{r,2}$.  Then $\Pg$ has dimension at most $r-2$ in $W_1$ by Proposition \ref{prop:assumiamor-2}; the set $\Abn_G$ is thus contained in the union $A\cup B$, where
$$
A := \{\pi\circ \gamma (1) : \gamma \text{ abnormal in } F_{r,2} \text{ and }\dim \Pg\leq r-3\}
$$
and 
$$
B := \{\pi\circ \gamma (1) : \gamma \text{ abnormal in } F_{r,2} \text{ and }\dim \Pg= r-2\}.
$$
Since $A$ coincides  with the projection $\pi(\mathscr A)$ of the set $\mathscr A$ introduced in Lemma \ref{lem:insiemeA},   $A$ has codimension at least $ 5$ in $G$. 
Next, we  analyze the set $B$ and consider an abnormal curve $\ga$ in $F_{r,2}$ such that $\Pg$ is $(r-2)$-dimensional; in particular, $\dim[\Pg,W_1]=\frac{r(r-1)}{2}-1=\dim W_2-1$. It follows that the projection $\pi\circ\gamma$ is abnormal if and only if $Z\in [\Pg,W_1]$, which is an easy consequence of the equalities
\[
\IE_{\pi\circ\ga}=\pi(\IE_\gamma)=\pi(W_1\oplus[\Pg,W_1] ),
\]
where we denoted by $\pi$ also the projection $\mathfrak f_{r,2}\to\g$. Let $\ell=\frac{r(r-1)}{2}$ and fix a basis $Z_1,\dots,Z_{\ell}$ of $W_2$. Without loss of generality, we may assume $Z_1=Z$.
 Let $Z_1^\prime,\dots,Z_{\ell-1}^\prime$ be independent vectors of $[\Pg,W_1]$, that we can write as
 $Z_j^\prime=\sum_{i=1}^\ell a_i^j Z_i$, $j=1,\dots,\ell-1$.
 Then $Z\in [\Pg,W_1]$ if and only if the matrix
$$
\begin{pmatrix}
Z&Z_1^\prime &\cdots & Z_{\ell-1}^\prime
\end{pmatrix}
=
\begin{pmatrix}
1 & a_1^1&\dots &a_{\ell-1}^\ell\\
0& \vdots &&\vdots\\
\vdots &\vdots &&\vdots\\
0 & a_\ell^{1}&\dots&a_\ell^{\ell-1}
\end{pmatrix}
$$
has determinant equal to zero. This gives a non-trivial algebraic condition on $\Pg$. Therefore, the space of all $(r-2)$-dimensional subspaces $\Pg$ of $W_1$ such that $Z\in [\Pg,W_1]$ has dimension at most  $\dim Gr(r,r-2) - 1= 2(r-2)-1$.
Now, every $(r-2)$-dimensional plane in $W_1$ generates a Lie algebra of dimension $\frac{(r-2)(r-1)}{2}$. 
So $B$ has dimension at most $2(r-2)-1 + \frac{(r-2)(r-1)}{2}= \frac{r^2+r}{2}-4$, that is, $B$ has codimension at least $ 3$ in $G$. We then conclude that  $\Abn_G$ has codimension  at least $ 3$, as stated.
\end{proof}

\subsection{Dimension $\frac{r^2+r}2-2$}\label{sec:libero-due}

Let $G$ be a Carnot group of step 2 whose algebra $\g=V_1\oplus V_2$ is such that $\dim V_1=r$ and $\dim V_2=\frac{r^2-r}2 - 2$. Let $F=F_{r,2}$ be the free Carnot group of rank $r$ and step 2; in this section it will be convenient to identify the Lie algebra $\f$ of $F$ with $V_1\oplus \Lambda^2 V_1$; accordingly,  we will indifferently use  the symbols $[X,Y]$ and $X\wedge Y$ to denote the Lie bracket of $X,Y\in V_1$. There exists a 2-dimensional subspace $W\subseteq \Lambda^2 V_1$ such that 
\begin{equation}\label{eq:introducoW}
V_2\equiv {\Lambda^2 V_1}/{W}.
\end{equation}
We use the same symbol $\pi$ to denote the canonical projections $\pi:\Lambda^2 V_1\to V_2$ and  $\pi:F\to G$. 

Before stating the main result of this section, some preparatory work is in order. Let $a\in W\setminus\{0\}$ be a 2-vector and let $k\geq 1$ be its rank; let  $e_1,e_2,\dots,e_{2k}\in V_1$ be such that
\[
a=(e_1\wedge e_2) +\dots+(e_{2k-1}\wedge e_{2k}).
\]
We complete $e_1,\dots,e_{2k}$ to a basis $e_1,\dots,e_{r}$ of $V_1$ and we endow $V_1$ of a scalar product making this basis orthonormal. This induces a canonical scalar product on $\Lambda^2 V_1$ making $(e_i\wedge e_j)_{1\leq i<j\leq r}$ an orthonormal basis. Finally, we choose $b\in W$ in such a way that $a,b$ is a basis of $W$; writing $b=\sum_{1\leq i<j\leq r} c_{ij}\ e_i\wedge e_j$, up to replacing $b$ with $b-c_{12}a$ we can assume that 
\begin{equation}\label{eq:introducob}
c_{12}=0.
\end{equation}

Let us introduce the following  skew-symmetric $r\times r$ matrices:
\[
\Om:=\left( 
\begin{array}{ccccccc|c}
0 & 1 &  &  &  & &  & \\
-1 & 0 &  &  &  & &  & \\
 &  & 0 & 1 &  & &  & \\
 &  & -1 & 0 &  & &  & {\bf 0}\\
 &  &  &  & \ddots & &  & \\
 &  &  &  &  & 0 & 1 &  \\
 &  &  &  &  & -1 & 0 &  \\
\hline 
 &  &  & {\bf 0} &  &  &  & {\bf 0} 
\end{array}
\right),
\]
where $k$ blocks of the form ${\tiny{\left(\begin{array}{cc}0&1\\-1&0\end{array}\right)}}$ appear, all not-shown entries are null and ${\bf 0}$ denotes  null matrices of  proper sizes, and
\[
C:=\left( 
\begin{array}{cccccccc}
0 & c_{12} & c_{13} & c_{14} & \cdots \\
-c_{12} & 0 & c_{23} & c_{24} & \cdots \\
-c_{13} & -c_{23} & 0 & c_{34} & \cdots \\
\vdots & \vdots & \vdots & \ddots & \cdots 
\end{array}
\right)
\]
These matrices have the following notable relations with $a,b$. If one considers $x,y\in V_1$ as column-vectors written in the basis $e_1,\dots,e_r$, then
\begin{equation}\label{eq:scalarimatrici}
(x\wedge y)\cdot a =x\cdot(\Om y) = x^t\Om y\quad\text{and}\quad (x\wedge y)\cdot b =x\cdot(C y) = x^tC y,
\end{equation}
where $\cdot$ denotes  scalar product  (either in $V_1$ or in $\Lambda^2 V_1$) and $t$ denotes transposition. Notice also that, by skew-symmetry,
\begin{equation}\label{eq:skews}
x\cdot(\Om y)=-(\Om x)\cdot y\quad\text{and}\quad x\cdot(C y)=-(C x)\cdot y.
\end{equation}

We will later need the following special form of the matrix $C$ in the particular case in which  $\Om$ and $C$ commute.

\begin{lemma}\label{lem:casocommutativo}
Assume that $\Om C= C\Om$. Then, the basis $e_1,\dots, e_r$ of $V_1$ can be chosen in such a way that  
\[
a=(e_1\wedge e_2) +\dots+(e_{2k-1}\wedge e_{2k})
\]
and there exists a $(r-2k)\times(r-2k)$ skew-symmetric matrix $D$ such that
\[
C=\left( 
\begin{array}{ccccccc|c}
0 & c_{12} &  &  &  & &  & \\
-c_{12} & 0 &  &  &  & &  & \\
 &  & 0 & c_{34} &  & &  & \\
 &  & -c_{34} & 0 &  & &  & {\bf 0}\\
 &  &  &  & \ddots & &  & \\
 &  &  &  &  & 0 & c_{2k-1,k} &  \\
 &  &  &  &  & -c_{2k-1,k} & 0 &  \\
\hline 
 &  &  & {\bf 0} &  &  &  & D
\end{array}
\right).
\]
In particular, $c_{ij}=0$ whenever $i=1,\dots,2k-1$ and $j\geq i+2$.
\end{lemma}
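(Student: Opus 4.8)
The plan is to exploit the fact that, in the chosen orthonormal basis, $\Om$ is a genuine complex structure on the subspace it does not kill. Set $U:=\Span\{e_1,\dots,e_{2k}\}$ and note that $\ker\Om=\Span\{e_{2k+1},\dots,e_r\}$, so that $U=(\ker\Om)^\perp=\im\Om$ and $\Om$ restricts to an invertible map $J:=\Om|_U$ with $J^2=-\id$ and $J^t=-J$ (hence $J$ is orthogonal). The first step is to observe that the hypothesis $\Om C=C\Om$ forces $C$ to preserve both $\ker\Om$ and $\im\Om=U$: if $v\in\ker\Om$ then $\Om(Cv)=C(\Om v)=0$, and if $w=\Om u\in\im\Om$ then $Cw=C\Om u=\Om(Cu)\in\im\Om$. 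Thus, with respect to the orthogonal splitting $V_1=U\oplus\ker\Om$, the matrix $C$ is block diagonal: its off-diagonal blocks vanish (this already gives $c_{ij}=0$ for $i\le 2k<j$), and its block on $\ker\Om$ is an arbitrary $(r-2k)\times(r-2k)$ skew-symmetric matrix, which will be the desired $D$.

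It then remains to put the $2k\times 2k$ block $C_0:=C|_U$ in the claimed form while keeping $\Om|_U=J$ standard. The key algebraic remark is that $K:=J^{-1}C_0$ is \emph{symmetric} and commutes with $J$. Symmetry follows from \eqref{eq:skews}: since $J^t=-J$ and $C_0^t=-C_0$, one gets $K^t=C_0^t(J^{-1})^t=(-C_0)(-J^{-1})=C_0 J^{-1}=J^{-1}C_0=K$, where $C_0$ and $J^{-1}$ commute because $C$ and $\Om$ do; the same commutation yields $JK=C_0=KJ$.

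Next I would apply the spectral theorem to the symmetric operator $K$ on $U$. Its eigenspaces are mutually orthogonal, and since $K$ commutes with $J$ each eigenspace $E_\lambda$ is $J$-invariant; as $J^2=-\id$, every $E_\lambda$ is even-dimensional and carries the compatible complex structure $J|_{E_\lambda}$. Choosing in each $E_\lambda$ an orthonormal $J$-adapted basis $f_1,Jf_1,f_2,Jf_2,\dots$ (pick a unit vector, adjoin its image under $J$, which is automatically a unit vector orthogonal to it, and pass to the $J$-invariant orthogonal complement) produces a new orthonormal basis of $U$ in which $J$ is in standard symplectic block form and $K=\lambda\,\id$ on each block. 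Reordering these vectors as $e_1,\dots,e_{2k}$ and completing with an orthonormal basis $e_{2k+1},\dots,e_r$ of $\ker\Om$ keeps $a$ in the normal form $a=(e_1\wedge e_2)+\dots+(e_{2k-1}\wedge e_{2k})$, while on each $2\times 2$ block one computes $C_0=JK=\lambda J=\left(\begin{smallmatrix}0&\lambda\\-\lambda&0\end{smallmatrix}\right)$. This is exactly the asserted block-diagonal shape of $C$, with $c_{2i-1,2i}=\lambda_i$ and all remaining entries among the first $2k$ coordinates equal to zero.

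The main obstacle—the only point beyond bookkeeping—is the \emph{simultaneous} normalization on $U$: the orthonormal change of basis diagonalizing $K$ must at the same time be symplectic for $\Om$, so that $a$ is not spoiled. This is precisely what the identities $JK=KJ$ and $J^2=-\id$ provide: they force the $K$-eigenspaces to be complex (hence symplectic) subspaces on which a $J$-adapted orthonormal basis exists, making the diagonalizing map automatically unitary, equivalently orthogonal symplectic. I would finally double-check the sign conventions so that each $2\times 2$ symplectic block of $\Om$ is exactly $\left(\begin{smallmatrix}0&1\\-1&0\end{smallmatrix}\right)$ and not its negative, a point that is settled by the order in which $f_j$ and $Jf_j$ are listed.
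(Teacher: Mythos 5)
Your proof is correct, and it reaches the normal form by a genuinely different mechanism than the paper's. For the block splitting, the paper verifies $c_{ij}=0$ for $i\leq 2k<j$ by a direct computation exploiting that $-\Om^2$ is the orthogonal projection onto $\Span\{e_1,\dots,e_{2k}\}$; your observation that commutation forces $C$ to preserve both $\ker\Om$ and $\im\Om=(\ker\Om)^\perp$ gives the same conclusion more conceptually. For the $2k\times 2k$ block, the paper computes entries to show it consists of $2\times2$ conformal blocks $\left(\begin{smallmatrix}a&b\\-b&a\end{smallmatrix}\right)$, i.e.\ that it is $\C$-linear under the identification $\R^{2k}\equiv\C^k$ in which $\Om$ acts as multiplication by $-i$, and then invokes simultaneous diagonalization over the algebraically closed field $\C$; you instead pass to the symmetric operator $K=J^{-1}C_0$ (your verification that $K^t=K$ and $JK=KJ=C_0$ is correct, using $(J^{-1})^t=-J^{-1}$ and the commutation) and apply the real spectral theorem, building a $J$-adapted orthonormal basis inside each eigenspace. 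Your route makes explicit a point the paper leaves implicit: the diagonalizing change of basis must be orthogonal (unitary), since otherwise the new basis is not orthonormal and neither the normal form of $a$ nor the identities \eqref{eq:scalarimatrici} would survive. In the paper's complex picture this holds because the block, being real skew-symmetric and $\C$-linear, is skew-Hermitian, hence normal and unitarily diagonalizable — algebraic closedness alone yields eigenvalues but not diagonalizability, let alone an orthonormal eigenbasis. The real spectral theorem hands you orthogonality of the eigenspaces for free, your inductive construction of the pairs $f_j, Jf_j$ (with $\|Jf\|=\|f\|$ and $f\perp Jf$ from skew-symmetry and $J^tJ=\id$) is sound, and your closing remark about the order in which $f_j$ and $Jf_j$ are listed correctly disposes of the one remaining sign issue, so that each $\Om$-block is $\left(\begin{smallmatrix}0&1\\-1&0\end{smallmatrix}\right)$ and $C_0=\lambda J$ on each eigenspace gives exactly the asserted blocks $\left(\begin{smallmatrix}0&\lambda\\-\lambda&0\end{smallmatrix}\right)$.
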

\begin{proof}
Since $-\Om^2$ is the projection on span $\{e_1,\dots,e_{2k}\}$, for any $i,j$ such that $i\leq 2k <j$ we have by skew-symmetry
\[
\begin{split}
0 &=  \Om(e_i+e_j) \cdot C\Om (e_i+e_j) = \Om(e_i+e_j) \cdot \Om C(e_i+e_j) \\
&=  - \Om^2 (e_i+e_j) \cdot C(e_i+e_j) = e_i\cdot C(e_i+e_j) = c_{ij}.
\end{split}
\]
In particular, $C$ takes the form
\[
C=\left( 
\begin{array}{c|c}
E & \bf 0\\
\hline 
\bf 0 & D
\end{array}\right)
\]
for suitable $2k\times 2k$ and $(r-2k)\times(r-2k)$ matrices $E,D$.

Let now $i,j\in\{1,\dots, 2k\}$ be fixed with $j$ odd. Upon agreeing that $c_{ii}=0$ and $c_{ij}=-c_{ji}$ if $i>j$, one has
\[
\begin{split}
c_{ij} =&  e_i\cdot Ce_j = e_i\cdot C\Om e_{j+1} \\
=&   e_i\cdot \Om C e_{j+1} = -\Om e_i\cdot  C e_{j+1}=
\left\{
\begin{array}{ll}
-c_{i-1,j+1} & \text{if $i$ is even}\\
-c_{i+1,j+1} & \text{if $i$ is odd.}
\end{array}
\right.
\end{split}
\]
Hence the $2k\times 2k$ matrix $E$ is composed of $k^2$ $(2\times 2)$-blocks of the form ${\tiny{\left(\begin{array}{cc} a&b\\-b&a\end{array}\right)}}$, which in turn gives the following information: upon identifying $\R^{2k}\equiv \C^k$ in the standard way
\[
(x_1,\dots,x_{2k})\longleftrightarrow (x_1+ix_2,\dots, x_{2k-1}+ix_{2k}),
\]
$C$ can be identified with a $\C$-linear endomorphism of $\C^k$ (notice that, with this identification, $\Om$ is the scalar multiplication by $-i$ in $\C^k$). Since $\Om$ and $\C$ commute, each of them preserve the other's eigenspaces; since $\C$ is algebraically closed, they can be simultaneously diagonalized in $\C^k$, which proves the Lemma.
\end{proof}

We will also need the following simple result

\begin{lemma}\label{lem:inunsottospazio}
Let $H$ be an hyperplane in $V_1$; then, the set $B\subseteq F$ defined by
\[
B:=\bigcup_{U\in Gr(H,2)}\exp(U^\perp\oplus [U^\perp,U^\perp])
\]
has dimension at most $\frac{r^2+r}2 - 5$. In particular, the dimension of $\pi(B)\subseteq G$ is at most $\frac{r^2+r}2 - 5$.
\end{lemma}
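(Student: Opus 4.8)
The plan is to prove Lemma~\ref{lem:inunsottospazio} by estimating the dimension of the set $B$ as a union of exponential images parametrised over the Grassmannian $Gr(H,2)$, exactly in the spirit of the dimension count carried out in Lemma~\ref{lem:insiemeA}. First I would fix the hyperplane $H\subseteq V_1$, so $\dim H = r-1$, and I would observe that each $U\in Gr(H,2)$ is a $2$-plane, hence $U^\perp\subseteq V_1$ is a subspace of dimension $r-2$ (the orthogonal complement taken inside $V_1$ with respect to the scalar product fixed earlier in the section). The key structural point is that $\exp(U^\perp\oplus[U^\perp,U^\perp])$ is exactly the copy inside $F$ of the free step-$2$ group $F_{r-2,2}$ generated by the $(r-2)$-dimensional space $U^\perp$, so each single such piece has dimension $\dim F_{r-2,2}=\frac{(r-2)^2+(r-2)}{2}=\frac{(r-2)(r-1)}2$.

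Next I would carry out the parameter count. The set $B$ is swept out by a smooth map whose domain has dimension $\dim Gr(H,2)+\dim F_{r-2,2}$: the first summand accounts for the choice of $U$ and the second for a point in the corresponding subgroup. Since $Gr(H,2)=Gr(r-1,2)$ has dimension $2(r-1-2)=2(r-3)$, the total is
\[
2(r-3)+\frac{(r-2)(r-1)}2 = \frac{r^2-r}2 - 4 +2r-6 = \frac{r^2+3r}2 -10.
\]
I would then check this against the claimed bound $\frac{r^2+r}2-5$; the difference is $\frac{r^2+3r}2-10-\bigl(\frac{r^2+r}2-5\bigr)=r-5$, so the naive count only matches the stated bound when $r=5$ and \emph{exceeds} it for $r\ge 6$. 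This discrepancy is the heart of the matter and signals that the naive count overcounts: as $U$ varies over the $2(r-3)$-dimensional Grassmannian, the subspaces $U^\perp$ (all containing $H^\perp$, a fixed line) vary only in an $(r-3)$-dimensional family of directions, and the resulting pieces overlap substantially, so $B$ is genuinely thinner than the product parametrisation suggests.

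The main obstacle, therefore, is to account honestly for this overlap rather than to multiply dimensions blindly. The clean way I would do this is to note that every $U\subseteq H$ satisfies $U^\perp\supseteq H^\perp$, and $H^\perp$ is a fixed line $\R v$ with $v\perp H$; thus $U^\perp = \R v\oplus(U^\perp\cap H)$, and $U^\perp\cap H$ ranges over $(r-3)$-dimensional subspaces of $H$. Consequently every point of $B$ lies in $\exp(P\oplus[P,P])$ for some $(r-2)$-plane $P$ containing the fixed line $\R v$, i.e. $B\subseteq\bigcup_{P\ni v}\exp(P\oplus[P,P])$ where $P$ ranges over $(r-2)$-planes through $v$. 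This last family is parametrised by $P/\R v\in Gr(V_1/\R v,r-3)=Gr(r-1,r-3)$, which has dimension $2(r-1-(r-3))=2\cdot 2-\text{(recount)}$; carefully, $Gr(r-1,r-3)$ has dimension $(r-3)\cdot 2=2(r-3)$ as well, so I would instead parametrise by the complementary choice and track which directions of the subgroup $\exp(P\oplus[P,P])$ are actually independent as $P$ moves. The correct and efficient estimate is to bound $B$ directly by the dimension of $\{(P,x):P\ni v,\ \dim P=r-2,\ x\in \exp(P\oplus[P,P])\}$ modulo the fibre redundancy, yielding $\dim B\le \frac{r^2+r}2-5$. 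Verifying that the fibrewise overlap removes exactly the surplus $r-5$ of the naive count — equivalently, that the incidence variety of pairs $(P,x)$ with $x\in\exp(P\oplus[P,P])$ and $v\in P$ has dimension $\frac{r^2+r}2-5$ — is the one genuinely nontrivial step; everything else is the routine Grassmannian-plus-subgroup bookkeeping already used in Lemma~\ref{lem:insiemeA}. The final sentence of the statement, that $\dim\pi(B)\le\frac{r^2+r}2-5$, then follows immediately since the projection $\pi$ cannot increase dimension.
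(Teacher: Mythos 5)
Your overall strategy (parametrize $B$ by a choice of $U\in Gr(H,2)$ plus a point of the subgroup $\exp(U^\perp\oplus[U^\perp,U^\perp])\cong F_{r-2,2}$, and add dimensions) is exactly the paper's proof — but you derail it with an arithmetic slip. You expand $\frac{(r-2)(r-1)}{2}$ as $\frac{r^2-r}{2}-4$, whereas in fact
\[
\frac{(r-2)(r-1)}{2}=\frac{r^2-3r+2}{2}=\frac{r^2-r}{2}-(r-1),
\]
and these agree only when $r=5$. With the correct expansion the ``naive'' count closes on the nose:
\[
2(r-3)+\frac{(r-2)(r-1)}{2}=\frac{4r-12+r^2-3r+2}{2}=\frac{r^2+r}{2}-5,
\]
which is precisely the stated bound, with no surplus of $r-5$ and no overlap to account for. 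This one line is the paper's entire proof.

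The genuine gap in your write-up is what comes after the slip: believing the count exceeds the bound, you pivot to an ``overlap'' argument via the incidence variety $\{(P,x):v\in P,\ \dim P=r-2,\ x\in\exp(P\oplus[P,P])\}$, and there you assert but never prove the crucial claim that this variety has dimension $\frac{r^2+r}{2}-5$ — you even flag it yourself as ``the one genuinely nontrivial step.'' As written, the proof is therefore incomplete. Ironically, that incidence variety fibers over $Gr(V_1/\R v,\,r-3)\cong Gr(r-1,2)$ (dimension $2(r-3)$, as you note — duality gives the same dimension as $Gr(H,2)$) with fibers of dimension $\dim F_{r-2,2}=\frac{(r-2)(r-1)}{2}$, so its dimension is given by the very same sum, and the corrected arithmetic above would have finished your detour instantly. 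There is no substantial overlap phenomenon here, and no need for one: an upper bound on the dimension of a union swept out by a smooth map never requires the pieces to be disjoint. Fix the expansion, delete the detour, and your first paragraph plus the two-line computation is a complete proof; the final remark that $\pi$ cannot raise dimension (it is a smooth, indeed Lipschitz, map) is fine as you state it.
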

\begin{proof}
Each set of the form $\exp(U^\perp\oplus [U^\perp,U^\perp])$ is a subgroup of $F$ isomorphic to $F_{r-2,2}$. Hence, the set  $B$ can (locally) be  parametrized by a smooth map of a number of parameters equal to
\[
\dim Gr(H,2) + \dim F_{r-2,2} = 2(\dim H-2) +\frac{(r-2)(r-1)}{2} = \frac{r^2+r}2 - 5.
\]
\end{proof}

We can now state the main result of this section.

\begin{theorem}\label{thm:Obiettivo}
Let $G$ be a Carnot group associated with  a step two stratified Lie algebra $\g=V_1\oplus V_2$ with $\dim V_1 =r$ and $\dim V_2 =\frac{r(r-1)}{2}-2$. Then $\Abn_G$ has codimension at least $3$, i.e., $\dim \Abn_G\leq \frac{r^2+r}2 - 5$.
\end{theorem}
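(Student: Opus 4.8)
The plan is to lift everything to the free group and transfer the problem through the projection. With $\f = V_1\oplus\Lambda^2 V_1$ and $V_2\equiv\Lambda^2 V_1/W$ as in \eqref{eq:introducoW}, Proposition~\ref{prop:omomorfismo} gives $\Abn_G\subseteq\pi(\Abn_{F})$, while \eqref{eq:proiezIE} and Proposition~\ref{prop:abnstep2} show that an abnormal curve $\ga$ of $F$ projects to an abnormal curve of $G$ exactly when $\pi(\IE_\ga)\neq\g$, that is when $(\Pg\wedge V_1)+W\neq\Lambda^2 V_1$. Since abnormality in $F$ forces $\dim\Pg\leq r-2$, I would split the discussion according to $\dim\Pg$. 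When $\dim\Pg\leq r-3$ the endpoint lies in $\pi(\mathscr A)$, which by Lemma~\ref{lem:insiemeA} has dimension at most $\frac{r^2+r}2-6$, hence codimension at least $4$ in $G$; this contribution is negligible. All the work is concentrated on the set $B$ of endpoints of curves with $\dim\Pg=r-2$.

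For such a curve $\Pg\wedge V_1$ has codimension $1$ in $\Lambda^2 V_1$, so, as $\dim W=2$, the projection is abnormal if and only if $W\subseteq\Pg\wedge V_1$. Using $\mathrm{Im}\,\ga\subseteq\exp(\Pg\oplus\Lambda^2\Pg)$ I would bound $B\subseteq\bigcup_{P}\pi(\exp(P\oplus\Lambda^2 P))$, the union running over the family $\mathcal P$ of $(r-2)$-planes $P$ with $W\subseteq P\wedge V_1$, each fibre contributing dimension $\binom{r-1}2-\dim(W\cap\Lambda^2 P)$. The decisive reformulation is obtained by passing to the orthogonal complement: writing $U=P^\perp$, one has $(P\wedge V_1)^\perp=\Lambda^2 U$, so by \eqref{eq:scalarimatrici} the condition $W\subseteq P\wedge V_1$ is equivalent to $u_1^t\Om u_2=u_1^tC u_2=0$ for a basis $u_1,u_2$ of $U$; that is, $U$ must be isotropic for both skew forms $\Om$ and $C$. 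Hence $\mathcal P$ is parametrised by the variety $\mathcal U$ of $2$-planes isotropic for the whole pencil $\langle\Om,C\rangle$, and the whole estimate collapses to the single inequality $\dim\mathcal U\leq 2(r-3)$. Indeed, stratifying by $j=\dim(W\cap\Lambda^2 P)$, the strata with $j\geq1$ are automatically admissible because the drop in the fibre dimension exactly matches the amount by which $\dim\mathcal U$ may be allowed to grow (for $j=2$ one even has $U\subseteq(\mathrm{supp}\,a+\mathrm{supp}\,b)^\perp$, where the count is immediate).

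The heart of the argument, and the step I expect to be the main obstacle, is therefore the bound $\dim\mathcal U\leq 2(r-3)$: the two isotropy conditions must cut out the expected codimension $2$ inside $Gr(2,r)$. Each condition alone defines a hyperplane section of $Gr(2,r)$ in its Plücker embedding, so the only way $\dim\mathcal U$ can exceed $2(r-3)$ is if the two sections share a component. Rather than argue this via irreducibility of the sections, I would bound $\dim\mathcal U$ by explicit linear algebra, organising the analysis according to whether $\Om$ and $C$ commute. In the commuting case I would use Lemma~\ref{lem:casocommutativo} to put $C$ in the block-diagonal normal form aligned with $\Om$ (with $c_{12}=0$), which decouples the two conditions and allows a block-by-block count; in those sub-cases where the commonly isotropic planes all lie in a single hyperplane $H\subseteq V_1$, I would instead invoke Lemma~\ref{lem:inunsottospazio}, whose bound $\dim Gr(H,2)+\dim F_{r-2,2}=\frac{r^2+r}2-5$ already yields codimension $3$ with no further information on $\mathcal U$. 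In the non-commuting case I would verify directly that the two conditions are independent, so that the common isotropic locus has the expected dimension.

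Putting the pieces together, $\Abn_G\subseteq\pi(\mathscr A)\cup B$ has dimension at most $\max\{\frac{r^2+r}2-6,\ 2(r-3)+\binom{r-1}2\}=\frac{r^2+r}2-5$, which is the claimed codimension $3$. The delicate point, absorbing essentially all of the effort, is the uniform control of $\dim\mathcal U$ over every admissible pair of skew forms $(\Om,C)$: the commuting configurations (treated by the normal form of Lemma~\ref{lem:casocommutativo}) and the hyperplane-confined configurations (treated by Lemma~\ref{lem:inunsottospazio}) are precisely the borderline families that force the separate case analysis.
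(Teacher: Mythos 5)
Your reduction scheme coincides with the paper's: lifting via Proposition~\ref{prop:omomorfismo}, disposing of $\dim\Pg\leq r-3$ through Lemma~\ref{lem:insiemeA}, characterizing abnormality of the projection by $W\subseteq[\Pg,V_1]$ when $\dim\Pg=r-2$, and dualizing to the condition that $U=\Pg^\perp$ be isotropic for both skew forms $\Om$ and $C$, with the commuting/non-commuting dichotomy and Lemmas~\ref{lem:casocommutativo} and~\ref{lem:inunsottospazio} in the same roles. (Your stratification by $j=\dim(W\cap\Lambda^2 P)$ is a refinement the paper does not need: it bounds every fibre by $\dim F_{r-2,2}$ and puts all the burden on the base; your $j\geq1$ claim is in fact true but needs the support argument you only sketch for $j=2$.)

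There is, however, a genuine gap at the step you yourself identify as the heart of the matter, and it sits in the non-commuting case. ``Verify directly that the two conditions are independent'' fails as a pointwise statement: on the common zero locus, the differentials of $x\cdot\Om y$ and $x\cdot Cy$ (modulo the sphere and orthogonality constraints) become linearly dependent exactly where $(Cx,Cy)$ is proportional to $(\Om x,\Om y)$, i.e.\ where $y\cdot(\Om C-C\Om)x=0$; the two hyperplane sections of $Gr(V_1,2)$ are tangent along this locus, which for $\Om C\neq C\Om$ is in general nonempty. A direct rank computation on the six defining equations therefore breaks down there, and nothing in your sketch addresses it --- note that you reserve the hyperplane-confinement device of Lemma~\ref{lem:inunsottospazio} for the commuting case only. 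The paper needs two extra ideas precisely at this point. First, it chooses a hyperplane $H\supseteq\ker(\Om C-C\Om)$ and absorbs all planes $U\subseteq H$ via Lemma~\ref{lem:inunsottospazio}, so that on the remaining locus at least one of $(\Om C-C\Om)x$, $(\Om C-C\Om)y$ is nonzero. Second, it splits the residual variety as $M=M_1\cup M_2$: on $M_1$, where $y\cdot(\Om C-C\Om)x\neq0$, the six gradients are shown to be independent (dependence would force $(\Om x,\Om y)$ and $(Cx,Cy)$ proportional, whence $y\cdot(\Om C-C\Om)x=0$); on $M_2$ the degeneracy equation $y\cdot(\Om C-C\Om)x=0$ is itself adjoined as a seventh defining equation, and rank $6$ is recovered through the proportionality $(Cx,Cy)=\la(\Om x,\Om y)$ of \eqref{eq:la} together with $\Om y\neq0$ (which follows from $y_1\neq0$). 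Without this splitting, or some substitute that controls the tangency locus, the bound $\dim\mathcal U\leq 2(r-3)$ away from hyperplane-confined configurations is not established, and with it the entire codimension count. Your commuting-case plan is closer to complete: after Lemma~\ref{lem:casocommutativo}, the normalization $c_{12}=0$, and the removal of $\ker C$ via Lemma~\ref{lem:inunsottospazio}, the rank computation on the paper's variety $N$ does go through directly, since the block forms yield $\la_5 y_1=0$ and hence a contradiction.
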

\begin{proof}
As in the previous section, our strategy consists in studying the set of abnormal curves $\gamma$ in $F$ such that $\pi\circ\gamma$ is abnormal in $G$. By Lemma \ref{lem:insiemeA}, if $\ga$ is such that $\dim\Pg\leq r-3$, then $\pi\circ\ga$ is contained in a subset of $G$ with codimension at least 4. It is therefore enough to study those abnormal curves $\ga$ in $F$ such that $\dim\Pg=r-2$ and $\pi\circ\ga$ is abnormal in $G$.

Let then such a $\ga$ be fixed; since $\dim[\Pg,V_1]=\frac{r^2-r}2 -1$, we have $\codim\IE_\ga^F=1$ and, in particular,
\[
\codim\IE_{\pi\circ\ga}^G = \codim\pi(\IE_\ga^F)\leq 1.
\]
Since $\pi\circ\ga$ is abnormal in $G$, we have therefore $\codim\pi(\IE_\ga^F) = 1$, which is equivalent to $W\subseteq \IE_\ga^F=V_1\oplus[\Pg,V_1]$ (recall that $W$ was introduced in \eqref{eq:introducoW}) and, in turn, to 
\begin{equation}\label{eq:(2)}
W\subseteq [\Pg,V_1]=[\Pg^\perp,\Pg^\perp]^\perp.
\end{equation}
We have therefore to study those planes $U:=\Pg^\perp\in Gr(V_1,2)$ such that $W\perp[U,U]$. By Lemma \ref{lem:inunsottospazio} it is enough to study the case in which $U\not\subseteq e_1^\perp$. Since $U$ is 2-dimensional, there exists a unique (up to a sign) unit vector $x\in U\cap e_1^\perp$. Writing $x=(x_1,\dots,x_r)$ with respect to the basis $e_1,\dots,e_r$, we have $x_1^2+\dots+x_r^2=1$ and $x_1=0$. Let then $y\in U$ be the unique (up to a sign) unit vector such that $x\cdot y=0$; notice that $y_1\neq 0$, otherwise $U=$span$\{x,y\}\subseteq e_1^\perp$. Setting
\[
\widetilde M:=\{(x,y)\in\R^r\times\R^r:\|x\|^2=\|y\|^2=1,x_1= 0,x\cdot y=0,y_1\neq 0\},
\]
the map 
\[
\widetilde M\ni (x,y)\longmapsto U_{x,y}:=\text{span}\{x,y\}\in Gr(V_1,2)\setminus Gr(e_1^\perp,2)
\]
is smooth, locally injective and surjective (actually, it is 4-to-1); moreover, $[U_{x,y},U_{x,y}]=$ span $\{x\wedge y\}$. Therefore, the analysis of those planes $U\not\subseteq e_1^\perp$ such that $W\perp[U,U]$ can be reduced to the analysis of the couples $(x,y)\in\widetilde M$ such that $W=$ span $\{a,b\}\perp x\wedge y$, or equivalently (recall \eqref{eq:scalarimatrici}) such that
\[
x\cdot\Om y=x\cdot Cy=0\,.
\]
We divide our study in two cases according to whether $\Om$ and $C$ commute or not.

{\em Case 1: $\Om C\neq C\Om$.} In this case we have ker $(\Om C-C\Om)\neq V_1$, hence ker $(\Om C-C\Om)$ is contained in some hyperplane $H\subseteq V_1$. By Lemma \ref{lem:inunsottospazio} it is enough to consider those couples $(x,y)$ such that $U_{x,y}\not\subseteq H$; notice that this condition implies that either $x\notin$ ker $(\Om C-C\Om)$ or $y\notin$ ker $(\Om C-C\Om)$. We will show that the set
\begin{equation}\label{eq:defM}
M:=\left\{(x,y)\in\R^r\times\R^r:
\begin{array}{l}
 \|x\|^2=\|y\|^2=1,x_1= 0,x\cdot y=0,y_1\neq 0,\\
 x\cdot\Om y=x\cdot Cy=0,\\
\|(\Om C-C\Om)x\|^2 + \|(\Om C-C\Om)y\|^2> 0
\end{array}
\right\}
\end{equation}
is the union of two smooth manifolds $M_1,M_2$ of dimension $2r-6$, and we claim that this will be enough to prove Theorem \ref{thm:Obiettivo} in  Case 1. Indeed, by the discussion above, in order to prove that $\Abn_G$ has dimension at most  $\frac{r^2+r}2 - 5$ it is enough to show that (the projection on $G$ of) the set
\begin{equation}\label{eq:uuunione}
\bigcup_{(x,y)\in M}\exp(U_{x,y}^\perp\oplus [U_{x,y}^\perp,U_{x,y}^\perp])\ \cup\ \bigcup_{U\in Gr(e_1^\perp,2)\cup Gr(H,2)}\exp(U^\perp\oplus [U^\perp,U^\perp])
\end{equation}
has dimension at most $\frac{r^2+r}2 - 5$. This would be true because the second set in the right hand side has dimension at most $\frac{r^2+r}2 - 5$ by Lemma \ref{lem:inunsottospazio}, while (provided $M=M_1\cup M_2$ as claimed above) the first set can (locally) be parametrized by two smooth maps of $ (2r-6) + \dim F_{r-2,2}= \frac{r^2+r}2 - 5$ parameters.

We start by showing that
\[
M_1:=M\cap\{(x,y)\in\R^r\times\R^r:y\cdot(\Om C-C\Om)x\neq 0\}
\]
is a smooth submanifold of dimension $2r-6$. Defining the open set
\[
\mathcal O_1:= \{(x,y)\in\R^r\times\R^r:y_1\neq 0, \|(\Om C-C\Om)x\|^2 + \|(\Om C-C\Om)y\|^2> 0, y\cdot(\Om C-C\Om)x\neq 0\}
\]
we have 
\[
M_1=\mathcal O_1\cap \left\{(x,y)\in\R^r\times\R^r:F(x,y)=(1,1,0,0,0,0)
\right\}
\]
where $F(x,y):=(\|x\|^2,\|y\|^2,x\cdot y,x_1,x\cdot\Om y,x\cdot Cy)$. We have to show that the rank of $\nabla F$ is 6 at all points of $M_1$; using \eqref{eq:skews} one can compute 
\[
\nabla F(x,y)=\left(
\begin{array}{c|c|c|c|c|c}
2x & {\bf 0} & y & \ep_1 & \Om y & Cy \\
\hline
{\bf 0} & 2y & x & {\bf 0} & -\Om x & -Cx 
\end{array}
\right),
\]
where $\ep_1=(1,0,\dots,0)^t\in\R^r$ and ${\bf 0}$ is the null (column) vector in $\R^r$. Assume by contradiction that there exists $(x,y)\in M_1$ such that the six columns are linearly dependent, i.e., there exists $\la\in \R^6\setminus\{0\}$ such that
\begin{equation}\label{eq:(4)}
\begin{cases}
\la_1 x + \la_3 y+\la_4 \ep_1 + \la_5 \Om y+\la_6 Cy={\bf 0}\\
\la_2y + \la_3x- \la_5 \Om x-\la_6 Cx={\bf 0}
\end{cases}
\end{equation}
Taking into account \eqref{eq:skews} and the fact that $(x,y)\in M$, by scalar multiplying by $x$ and $y$ the two equalities in \eqref{eq:(4)}  one obtains
\[
\begin{cases}
\la_1  +\la_4 x_1=0\\
\la_3=0\\
\la_3 +\la_4 y_1=0\\
\la_2=0,
\end{cases}
\]
i.e., $\la_1=\la_2=\la_3=\la_4=0$, because $x_1=0$ and $y_1\neq 0$. Therefore $(\la_5,\la_6)\neq(0,0)$ are such that
\[
\begin{cases}
\la_5 \Om y+\la_6 Cy={\bf 0}\\
\la_5 \Om x+\la_6 Cx={\bf 0},
\end{cases}
\]
i.e., $(\Om x,\Om y)$ and $(Cx,Cy)$ are linearly dependent, hence
\[
0=-\Om y\cdot Cx + Cy\cdot\Om x =y\cdot(\Om C-C\Om)x,
\]
a contradiction.

We now show that $M_2:=M\setminus M_1$ is a $(2r-6)$-dimensional smooth manifold. 
Defining the open set
\[
\mathcal O_2:= \{(x,y)\in\R^r\times\R^r:y_1\neq 0, \|(\Om C-C\Om)x\|^2 + \|(\Om C-C\Om)y\|^2> 0\}
\]
we have 
\[
M_2=\mathcal O_2\cap \left\{(x,y)\in\R^r\times\R^r:G(x,y)=(1,1,0,0,0,0,0)
\right\}
\]
where $G(x,y):=(\|x\|^2,\|y\|^2,x\cdot y,x_1,x\cdot\Om y,x\cdot Cy,y\cdot (\Om C-C\Om)x)$. We have to show that the rank of $\nabla G$ is 6 at all points of $M_2$; using $y\cdot (\Om C-C\Om)x=-x\cdot (\Om C-C\Om)y$ one can compute 
\[
\nabla G(x,y)=\left(
\begin{array}{c|c|c|c|c|c|c}
2x & {\bf 0} & y & \ep_1 & \Om y & Cy & -(\Om C-C\Om)y\\
\hline
{\bf 0} & 2y & x & {\bf 0} & -\Om x & -Cx & (\Om C-C\Om)x
\end{array}
\right).
\]
Assume by contradiction that there exists $(x,y)\in M_2$ such that the rank of $\nabla G(x,y)$ is not 6; then, the first six columns are linearly dependent and, reasoning as above, one can show the existence of $(\la_5,\la_6)\neq(0,0)$  such that
\[
\la_5 (\Om x,\Om y)+\la_6 (Cx, Cy)=0.
\]
Since $y_1\neq 0$ we have $\Om y\neq 0$, hence $\la_6\neq0$; in particular
\begin{equation}\label{eq:la}
(Cx, Cy) = \la (\Om x,\Om y)\qquad\text{for }\la:=-\tfrac{\la_5}{\la_6}.
\end{equation}
Since also the columns 1--5 and 7 of $\nabla G$ are linearly dependent, there exists $\mu\in \R^6\setminus \{0\}$ such that
\[
\begin{cases}
\mu_1 x + \mu_3 y+\mu_4 \ep_1 + \mu_5 \Om y-\mu_6 (\Om C-C\Om)y={\bf 0}\\
\mu_2y + \mu_3 x- \mu_5 \Om x+\mu_6 (\Om C-C\Om)x={\bf 0}.
\end{cases}
\]
After scalar multiplication by $x$ and $y$, and taking into account that $v\cdot(\Om C-C\Om)v=0$ for any $v\in V_1$, one gets
\[
\begin{cases}
\mu_1  +\mu_4 x_1=0\\
\mu_3=0\\
\mu_3 +\mu_4 y_1=0\\
\mu_2=0,
\end{cases}
\]
and, as before, $\mu_1=\mu_2=\mu_3=\mu_4=0$. We deduce that there exist $(\mu_5,\mu_6)\neq(0,0)$ such that $\mu_5\Om y -\mu_6 (\Om C-C\Om)y={\bf 0}$. We scalar multiply this equation by $\Om y $ (which is not null because $y_1\neq 0$) to get
\begin{eqnarray*}
0 & =& \mu_5\|\Om y\|^2 - \mu_6 \Om y\cdot (\Om C-C\Om)y\\
& \stackrel{\eqref{eq:la}}{=} &\mu_5\|\Om y\|^2 - \mu_6 \big(\Om y\cdot \la\Om\Om y - \Om y\cdot C\Om y \big) =\mu_5\|\Om y\|^2,
\end{eqnarray*}
which contradicts the fact that $\Om y\neq 0$. This concludes Case 1.

{\em Case 2: $\Om C= C\Om$.} Let $C$ be as in Lemma \ref{lem:casocommutativo}; we can also assume \eqref{eq:introducob}. Reasoning as before (and, in particular, using Lemma \ref{lem:inunsottospazio} and the fact that $C\neq0$, i.e., that ker $C$ is contained in some hyperplane of $\R^r$), it will be enough to show that the variety 
\[
\begin{split}
N:= & \left\{
(x,y)\in\R^r\times\R^r:
\begin{array}{l}
 \|x\|^2=\|y\|^2=1,x_1=0,x\cdot y=0,y_1\neq 0,\\
 x\cdot(\Om y)=x\cdot (Cy)=0\text{ and } x,y\notin\text{ker }C
\end{array}
\right\}
\\
= & \{(x,y)\in\R^r\times\R^r: y_1\neq 0,x,y\notin\text{ker }C\text{ and }F(x,y)=(1,1,0,0,0,0)\}
\end{split}
\]
(where $F$ is as above) is $(2r-6)$-dimensional, i.e., that for any $(x,y)\in N$ the columns of $\nabla F(x,y)$ are linearly independent. If this were not the case, reasoning as in Case 1 one would find $(x,y)\in N$ and $(\la_5,\la_6)\neq(0,0)$ such that 
\[
\la_5 \Om x+\la_6 Cx={\bf 0}\quad\text{and}\quad
\la_5 \Om y+\la_6 Cy={\bf 0}.
\]
By the particular forms of the matrices $\Om$ and $C$ one has
\[
0=(\la_5 \Om y+\la_6 Cy)\cdot (0,1,0,\dots,0) = -\la_5 y_1
\]
and, since $y_1\neq0$, we get $\la_5=0$ and $\la_6\neq 0$. This implies that $x,y\in $ ker $C\neq\R^r$, a contradiction.
\end{proof}


\section{Step $2$, rank $4$, dimension $7$}\label{4+3}
The cases discussed in the previous sections cover all Carnot groups of dimension up to 6 and, among Carnot groups of dimension 7, only those with rank 4 are left out. We discuss them in the following Theorem \ref{teo:4+3}

\begin{lemma}\label{lem:basespeciale4+3}
Let $\g=V_1\oplus V_2$ be a  step two stratified Lie algebra   with $\dim V_1=4$ and $\dim V_2=3$; assume that there exists a linear subspace $P\subset V_1$ such that
\[
\text{$\dim P=2$,\quad $\dim[P,V_1]=2$\quad and\quad $\dim [P,P]=1$.}
\]
Then, there exist $\la\in\R$ and bases  $X_1,\dots,X_4$ of $V_1$ and $X_{21},X_{31},X_{32}$ of $V_2$ such that
\begin{align*}
& [X_2,X_1]=X_{21}, \quad [X_3,X_1]=X_{31}, \quad [X_4,X_3]=X_{43},\quad [X_4,X_2]=\lambda X_{31},\\
& [X_4,X_1]=[X_3,X_2]=0.
\end{align*}
\end{lemma}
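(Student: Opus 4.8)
We are given a step two stratified Lie algebra $\g=V_1\oplus V_2$ with $\dim V_1=4$, $\dim V_2=3$, together with a $2$-plane $P\subset V_1$ satisfying $\dim[P,V_1]=2$ and $\dim[P,P]=1$. The goal is to produce adapted bases of $V_1$ and $V_2$ in which the bracket relations take the stated normal form. My plan is to build the basis of $V_1$ step by step, using the three dimensional constraints to pin down the structure, and then to read off the bracket table.

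\textbf{Constructing the basis of $V_1$.}
First I would exploit $\dim[P,P]=1$: choosing any basis $X_1,X_2$ of $P$, the bracket $[X_2,X_1]$ spans the one-dimensional space $[P,P]$, so I set $X_{21}:=[X_2,X_1]\neq 0$. Next, since $\dim[P,V_1]=2$ while $\dim[P,P]=1$, bracketing $P$ against a complementary direction in $V_1$ must produce exactly one new direction in $V_2$. The plan is to pick a vector in $V_1\setminus P$ cleverly. As in footnote~\ref{foot:banale}, I can adjust any candidate complement by elements of $P$ to kill unwanted components: concretely, I would choose $X_3\in V_1\setminus P$ so that $[X_3,X_1]$ is linearly independent from $X_{21}$, set $X_{31}:=[X_3,X_1]$, and then \emph{normalize} $X_3$ modulo $P$ so that $[X_3,X_2]=0$ (subtracting a suitable multiple of $X_1$, whose bracket with $X_2$ is $-X_{21}$, achieves this). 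This yields $[X_3,X_1]=X_{31}$ and $[X_3,X_2]=0$, matching two of the target relations.

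\textbf{Fixing the last generator and the scalar $\lambda$.}
Now I would complete to a basis with a fourth vector $X_4\in V_1$. The remaining brackets to control are $[X_4,X_1]$, $[X_4,X_2]$, $[X_4,X_3]$, which a priori live in the three-dimensional $V_2$; since $\dim V_2=3$ and the stratification condition $V_2=[V_1,V_1]$ forces these together with $X_{21},X_{31}$ to span $V_2$, I expect one genuinely new direction $X_{43}:=[X_4,X_3]$ to appear, giving the third basis vector of $V_2$. The key normalization is to arrange $[X_4,X_1]=0$: again by the footnote~\ref{foot:banale} trick I can subtract multiples of $X_1,X_2,X_3$ from $X_4$ to eliminate the components of $[X_4,X_1]$ along $X_{21},X_{31},X_{43}$. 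Finally, $[X_4,X_2]$ must lie in $V_2$; I would argue that after the previous normalizations it is forced to be proportional to $X_{31}$, producing the scalar $\lambda$ with $[X_4,X_2]=\lambda X_{31}$. The main obstacle, and the step requiring the most care, is precisely this last claim: showing that the dimension hypotheses leave no room for $[X_4,X_2]$ to have components along $X_{21}$ or $X_{43}$, so that $\lambda$ is well defined. I expect this to follow by a careful dimension count on $[V_1,V_1]$ combined with Jacobi-type consistency (automatic in step two) and the constraint $\dim[P,V_1]=2$; the bookkeeping of which adjustments preserve which earlier relations is where the argument could become delicate.
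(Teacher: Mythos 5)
Your overall scheme --- building an adapted basis by successive corrections, exactly as the paper does --- is the right one, but two of your normalization mechanisms would fail as stated, and the step you yourself flag as the ``main obstacle'' is left open when in fact it is resolved by a simple observation you never invoke. The missing idea is this: since $X_1,X_2\in P$ and $[P,V_1]$ is the $2$-dimensional space $\Span\{X_{21},X_{31}\}$, \emph{every} bracket of \emph{any} vector of $V_1$ with $X_1$ or with $X_2$ automatically lies in $\Span\{X_{21},X_{31}\}$. This instantly rules out any $X_{43}$-component of $[X_4,X_1]$ and of $[X_4,X_2]$; no ``careful dimension count'' or Jacobi-type bookkeeping is needed. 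On the other hand, the $X_{21}$-component of $[X_4,X_2]$ is \emph{not} ``forced'' to vanish, contrary to what you hope: writing $[X_4,X_2]=eX_{21}+\lambda X_{31}$, one removes $e$ by the further substitution $X_4\mapsto X_4+eX_1$, which preserves $[X_4,X_1]=0$ because $[X_1,X_1]=0$. Only after this does one define $X_{43}:=[X_4,X_3]$; its independence from $X_{21},X_{31}$ then follows since $V_2=[V_1,V_1]$ is $3$-dimensional while all remaining brackets lie in $\Span\{X_{21},X_{31}\}$.

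Two of your concrete steps would fail. First, you claim to achieve $[X_3,X_2]=0$ by adjusting $X_3$ modulo $P$, ``subtracting a suitable multiple of $X_1$.'' But $[X_3,X_2]=aX_{21}+bX_{31}$ (it lies in $[P,V_1]$), and replacing $X_3$ by $X_3+\alpha X_1+\beta X_2$ yields $[X_3+\alpha X_1+\beta X_2,\,X_2]=(a-\alpha)X_{21}+bX_{31}$: the $X_{31}$-component $b$ cannot be touched from the $X_3$ side. The paper kills it by also changing the basis of $P$, replacing $X_2$ by $X_2-bX_1$, which preserves $[X_2,X_1]=X_{21}$ --- a move absent from your proposal. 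Second, you propose to eliminate a possible $X_{43}$-component of $[X_4,X_1]$ by subtracting multiples of $X_1,X_2,X_3$ from $X_4$; this is impossible, since $[X_1,X_1]=0$, $[X_2,X_1]=X_{21}$ and $[X_3,X_1]=X_{31}$ only produce contributions along $X_{21},X_{31}$. The step survives only because no such component exists in the first place, by the membership $[X_4,X_1]\in[P,V_1]$ noted above (and the same membership, for $[X_4,X_2]$, is what settles your final obstacle). So your proof is repairable along exactly the paper's lines, but as written the $X_3$-normalization is incorrect, the $X_4$-normalization is justified by a mechanism that cannot work, and the decisive claim about $[X_4,X_2]$ is asserted rather than proved.
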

\begin{proof}
We can first fix a basis $X_1,X_2$ of $P$ and a vector $X_3\in V_1\setminus P$ such that $X_{21}:=[X_2,X_1]$ and $X_{31}:=[X_3,X_1]$ are linearly independent. By assumption we will have
\[
[X_3,X_2]=aX_{21}+bX_{31}
\]
and, up to replacing $X_3,X_2$ with (respectively) $X_3+aX_1,X_2-bX_1$, we can assume $a=b=0$. Complete $X_1,X_2,X_3$ to a basis of $V_1$ by choosing some $X_4\in V_1$; we will have
\[
[X_4,X_1]=cX_{21}+dX_{31}
\]
and we can assume $c=d=0$ up to replacing $X_4$ with $X_4-cX_2-dX_3$. At this point, we have
\[
[X_4,X_2]=eX_{21}+\lambda X_{31}
\]
and we can assume $e=0$ up to replacing $X_4$ with $X_4+eX_1$. The choice of $X_{43}:=[X_4,X_3]$, which must necessarily be independent from $X_{21},X_{31}$, completes the proof.
\end{proof}

\begin{theorem}\label{teo:4+3}
Let $G$ be a Carnot group associated to a  step two stratified Lie algebra $\g=V_1\oplus V_2$  with $\dim V_1=4$ and $\dim V_2=3$. Then $\Abn_G$ has codimension at least $ 3$, i.e., $\dim \Abn_G\leq 4$.
\end{theorem}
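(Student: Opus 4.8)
The plan is to bound $\dim\Abn_G$ directly, translating the algebraic description of the abnormal set into the geometry of $2$-planes of $V_1$ on the Klein quadric. Write the bracket as a surjective linear map $\beta:\Lambda^2 V_1\to V_2$, $[u,v]=\beta(u\wedge v)$, and let $W:=\ker\beta$, a $3$-dimensional subspace of the $6$-dimensional space $\Lambda^2 V_1$. By Proposition~\ref{prop:abnstep2} a curve $\ga$ is abnormal iff $[\Pg,V_1]\neq V_2$, and then $\im\ga\subseteq\exp(\Pg\oplus[\Pg,\Pg])$ with $\dim\Pg\leq 2$; hence
\[
\Abn_G\ \subseteq\ \exp(V_1)\ \cup\ \bigcup_{P\in\Sigma}\exp(P\oplus[P,P]),\qquad \Sigma:=\{P\in Gr(V_1,2):[P,V_1]\neq V_2\}.
\]
Since $\exp(V_1)$ is a $4$-dimensional submanifold that already contains every image with $\dim\Pg\leq 1$, the whole task reduces to bounding the second union by $4$.

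First I would read the condition $P\in\Sigma$ off the wedge pairing $\Lambda^2 V_1\times\Lambda^2 V_1\to\Lambda^4 V_1\cong\R$, $(\omega,\eta)\mapsto\omega\wedge\eta$, which is symmetric, nondegenerate, of signature $(3,3)$; let $\perp$ denote orthogonality for it. For $P=\Span\{u,v\}$ set $\omega_P:=u\wedge v$. A direct check gives $(P\wedge V_1)=(\R\,\omega_P)^{\perp}$ (dimension $5$), whence $\dim[P,V_1]=\dim\beta(P\wedge V_1)=5-\dim\big(W\cap(P\wedge V_1)\big)$, so that
\[
P\in\Sigma\iff W\subseteq (P\wedge V_1)\iff \omega_P\in W^{\perp}.
\]
Under the Pl\"ucker embedding $Gr(V_1,2)\cong Q\subseteq\mathbb P(\Lambda^2 V_1)=\mathbb P^5$, where $Q$ is the Klein quadric of decomposable classes, this identifies $\Sigma$ with $Q\cap\mathbb P(W^{\perp})$, and $\mathbb P(W^{\perp})=\mathbb P^2$ since $\dim W^{\perp}=3$. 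Note also that $[P,P]=\R\,\beta(\omega_P)$, so the subgroup $\exp(P\oplus[P,P])$ has dimension $3$ when $\omega_P\notin W$ and dimension $2$ when $\omega_P\in W$.

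The heart of the argument is then a dichotomy on the plane $\mathbb P^2=\mathbb P(W^{\perp})$. If $\mathbb P^2\not\subseteq Q$, then $\Sigma=Q\cap\mathbb P^2$ is the zero set of a nonzero quadratic form on $\mathbb P^2$, so $\dim\Sigma\leq 1$; parametrising the incidence set $\{(P,v):P\in\Sigma,\ v\in P\oplus[P,P]\}$ and applying $\exp$, the union has dimension at most $\dim\Sigma+3\leq 4$. If instead $\mathbb P^2\subseteq Q$, then $W^{\perp}$ is a $3$-dimensional space of decomposable vectors, i.e. a maximal isotropic subspace for the wedge form, hence one of the two rulings of $Q$ (an $\alpha$-plane $\{P:x_0\in P\}$ or a $\beta$-plane $\{P\subseteq H\}$). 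A maximal isotropic subspace is self-orthogonal, so $W=(W^{\perp})^{\perp}=W^{\perp}$; consequently every $P\in\Sigma$ has $\omega_P\in W^{\perp}=W$, i.e. $[P,P]=0$, so all the relevant subgroups $\exp(P)$ are $2$-dimensional and the union has dimension at most $\dim\Sigma+2\leq 2+2=4$.

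The hard part is precisely this last case: the only way the bound $4$ could fail is a two-parameter family of \emph{non-abelian} abnormal planes, each sweeping a $3$-dimensional subgroup, for a total of $2+3=5$. The self-orthogonality $W=W^{\perp}$ forced by $\mathbb P^2\subseteq Q$ is exactly what excludes this, since it shows that a two-dimensional family of abnormal planes is automatically abelian. One can cross-check the degenerate cases against the structure of $G$: the $\alpha$-plane $W=x_0\wedge V_1$ makes $x_0$ central and $G\cong\R\times F_{3,2}$, so Proposition~\ref{prop:prodotti}, Example~\ref{ex:Rn} and Theorem~\ref{thm:free_step_two} give $\dim\Abn_G\leq 1+3=4$ at once, while the $\beta$-plane $W=\Lambda^2 H$ corresponds to a $3$-dimensional abelian ideal $H$. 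As an alternative that avoids the Pl\"ucker pairing, whenever a non-abelian abnormal plane exists one may instead pass to the normal form of Lemma~\ref{lem:basespeciale4+3} and count the admissible $P$ explicitly in those coordinates; the computation above makes such case work unnecessary.
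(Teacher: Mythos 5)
Your proof is correct, and it takes a genuinely different route from the paper's. The paper, after the same initial reduction to abnormal curves with $\dim\Pg=2$, non-abelian $\Pg$ and $\dim[\Pg,V_1]=2$, fixes the adapted basis of Lemma~\ref{lem:basespeciale4+3}, expresses abnormality as a rank condition on an explicit $3\times4$ matrix $M(X)$, extracts from its minors the equation $x_1x_3+\lambda x_2x_4=0$, and finishes with a hands-on case analysis ($x_2\neq0\neq x_4$; $x_2=0$; $x_4=0$) placing all endpoints in explicit $4$-dimensional varieties. You instead projectivize: your identity $P\wedge V_1=(\R\,\omega_P)^\perp$ is correct (both sides are the $5$-dimensional annihilator of $\omega_P$ under the wedge pairing), so rank-nullity for $\beta$ converts $[P,V_1]\neq V_2$ into $\omega_P\in W^\perp$ and identifies the abnormal $2$-planes with $Q\cap\mathbb{P}(W^\perp)$, where $\mathbb{P}(W^\perp)\cong\mathbb{P}^2$. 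Your dichotomy then does exactly the work of the paper's computation: either $\mathbb{P}(W^\perp)\not\subseteq Q$, the family is at most a conic, and the incidence count $1+3\leq 4$ (the same style of parameter count as in Lemmas~\ref{lem:insiemeA} and~\ref{lem:inunsottospazio}) closes the case; or $\omega\mapsto\omega\wedge\omega$ vanishes identically on $W^\perp$, polarization makes $W^\perp$ totally (hence maximally) isotropic for the signature-$(3,3)$ form, and $W^\perp\subseteq(W^\perp)^\perp=W$ forces $W=W^\perp$ by dimension, so every abnormal plane is abelian and the whole union collapses into the $4$-dimensional set $\exp(V_1)$. That self-duality step is the genuine substitute for the paper's normal form plus minors: it shows a priori that a $2$-parameter family of abnormal planes must be abelian, which is precisely the dangerous scenario ($2+3=5$) the paper excludes by explicit computation. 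As for trade-offs: the paper's method produces concrete defining polynomials for the covering varieties and a reusable normal form, and at least formally adapts to nearby dimension counts; yours is coordinate-free and structurally more transparent, but it hinges on coincidences special to $r=4$ --- namely $\dim\Lambda^2V_1=6$, the split signature of the wedge pairing, and $\dim W=\dim W^\perp=3$ --- so it is rigidly tied to the $(4,3)$ case. Your closing identification of $\mathbb{P}(W^\perp)$ with a ruling of $Q$ and the cross-check $G\cong\R\times F_{3,2}$ via Proposition~\ref{prop:prodotti}, Example~\ref{ex:Rn} and Theorem~\ref{thm:free_step_two} are consistent but, as you note yourself, redundant once $W=W^\perp$ is established.
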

\begin{proof}
By Proposition \ref{prop:assumiamor-2}, $\dim\Pg\leq 2$ for any abnormal curve $\ga$ in $G$. The union 
\[
\bigcup\Big\{{\rm Im\ }\ga:\ga\text{ abnormal in $G$ and }\dim\Pg=1\Big\}
\]
is contained in $\exp (V_1)$, whose codimension is 3. Similarly, if $\Pg$ has dimension 2 but it is Abelian, then is contained in $\exp (V_1)$.

We then have to consider only those abnormal curves $\ga$ such that $\dim\Pg=2$, $[\Pg,V_1]\neq V_2$ and $\dim [\Pg,\Pg]=1$. If  there existed one such $\ga$ with $\dim [\Pg,V_1]=1$, one could choose a basis $X_1,\dots,X_4$ of $V_1$ with $X_1,X_2\in \Pg$  such that 
$$
[X_i,X_j]=
\begin{cases}
X_{21}\neq 0  &\text{if } (i,j)=(2,1)\\
\in \R X_{21} & \text{for } i=1,2 \text{ and } j=3,4,
\end{cases}
$$
which would contradict the fact that $\dim V_2 =3$. Therefore all abnormal curves $\ga$ we have to consider satisfy
\begin{equation}\label{eq:suchagamma}
\text{$\dim\Pg=2$,\quad $\dim[\Pg,V_1]=2$\quad and\quad $\dim [\Pg,\Pg]=1$.}
\end{equation} 
Assuming that one such $\ga$ exists (otherwise the proof would be concluded) we can use  Lemma \ref{lem:basespeciale4+3} to find bases  $X_1,\dots,X_4$ of $V_1$ and $X_{21},X_{31},X_{32}$ of $V_2$ such that 
\begin{align*}
& [X_2,X_1]=X_{21}, \quad [X_3,X_1]=X_{31}, \quad [X_4,X_3]=X_{43},\quad [X_4,X_2]=\lambda X_{31},\\
& [X_4,X_1]=[X_3,X_2]=0
\end{align*}
for a suitable  $\lambda\in\R$. Let $\ga$ be an abnormal curve as in \eqref{eq:suchagamma}; the end point of $\gamma$ will be the exponential $\exp(U)$ of a vector
$$
U=X+Z= \sum_{i=1}^4 x_i X_i + x_{21}X_{21}+x_{31}X_{31} + x_{43}X_{43}
$$
with $X=\sum_{i=1}^4 x_i X_i \in \Pg$. We can safely assume that $X\neq 0$, for otherwise the endpoint $\exp(U)$ of $\ga$ would be contained in a variety of dimension 3.  Since $\dim [X,V_1] \leq 2$, we have to require that the matrix
$$
M(X):=
\begin{pmatrix}
x_2 & -x_1&0&0\\
x_3&\lambda x_4&-x_1&-\lambda x_2\\
0&0&x_4&-x_3
\end{pmatrix},
$$
has rank not greater than 2, where the  columns of $M(X)$ represent (in the basis $X_{21}, X_{31},X_{43}$) the vectors $[X,X_i]$ for $i=1,2,3,4$. On computing the determinants of the four $3\times 3$ minors we obtain
$$
x_i(x_1x_3+\lambda x_2x_4)=0 \quad\forall i=1,\dots,4
$$
and, since $X\neq 0$, we deduce that
\begin{equation}\label{eq:luogodizeri}
x_1x_3+\lambda x_2x_4=0.
\end{equation}
We also notice that, since $Z\in[\Pg,\Pg]\subset {\rm Im}\ad_X$, $Z$ must be a linear combination of the columns of $M(X)$.

From now on we use exponential coordinates adapted to the basis $X_1,\dots,X_{43}$ of $\g$, i.e., we identify $G$ and $\R^7$ by
\begin{multline*}
\R^7\ni(x_1,x_2,x_3,x_4,x_{21},x_{31},x_{43})\\
\longleftrightarrow\exp(x_1X_1+x_2X_2+x_3X_3+x_4X_4+ x_{21}X_{21}+x_{31}X_{31}+x_{43}X_{43})\in G.
\end{multline*}

Assume first that either $x_2=x_3=0$ or $x_1=x_4=0$; then the endpoint $\exp(U)$ of $\gamma$ belongs to the variety $  A \cup B,$ where 
$$
A:=\{(x_1,0,0,x_4,x_{21},x_{31},x_{43})\, :\, (x_{21},x_{31},x_{43})=a(-x_1,\lambda x_4,0)+b(0,-x_1,x_4), x_1,x_4,a,b\in \R\}
$$
and 
$$
B=\{(0,x_2,x_3,0,x_{21},x_{31},x_{43})\, :\, (x_{21},x_{31},x_{43})=a(x_2,x_3,0)+b(0,-\lambda x_2,-x_3), x_2,x_3,a,b\in \R\}.
$$
This easily follows from the fact that $Z$ is a linear combination of the columns of $M(X)$. The variety $A\cup B$ has dimension 4.

We can now assume that 
\begin{equation}\label{eq:acoppie}
(x_1,x_4)\neq (0,0)\qquad\text{and}\qquad(x_2,x_3)\neq (0,0).
\end{equation}
Consider $Y=\sum_{i=1}^4 y_iX_i\in\Pg$ linearly independent from $X$: without loss of generality, we can also assume\footnote{Up to replacing $Y$ with $Y+cX$ for a proper $c\in\R$.} that if $x_i\neq 0$ then $y_i\neq 0$. We consider three cases to conclude the proof.
\begin{itemize}
\item[(i)] Suppose  that $x_2\neq 0$ and $x_4\neq 0$. Then ${\rm Im} \ad_X$ contains the linearly independent vectors $(x_2,x_3,0)$ and $(0,-x_1,x_4)$. Since $[\Pg,V_1]$ has dimension two, ${\rm Im} \ad_Y$ is also linearly generated by $(x_2,x_3,0)$ and $(0,-x_1,x_4)$. In particular, our choice of $Y$ yields
$$
(y_2,y_3,0)=a(x_2,x_3,0) \quad \text{and}\quad (0,-y_1,y_4)=b(0,-x_1,x_4),
$$
for $a,b\in \R\setminus\{0\}$. Since $Z$ is a multiple of
\begin{align*}
[X,Y]&=(b-a)\big(x_1x_2X_{21}+(x_1x_3-\lambda x_2x_4)X_{31}-x_3x_4X_{43}\big)\\
& =(b-a)\big(x_1x_2X_{21}+2x_1x_3X_{31}-x_3x_4X_{43}\big)
\end{align*}
by \eqref{eq:luogodizeri}, the endpoint $\exp(U)$ of $\gamma$ is in the four dimensional variety
$$
\{(x_1,x_2,x_3,x_4,tx_1x_2,2tx_1x_3,-tx_3x_4)\,:\,x_1,x_2,x_3,x_4,t\in\R,x_1x_3+\lambda x_2x_4=0\}.
$$
\item[(ii)] Suppose that $x_2=0$. By \eqref{eq:acoppie}, $x_3\neq 0$ and $M(x)$ has two linear independent columns $(0,x_3,0)$ and $(0,0,x_3)$, which therefore generate ${\rm Im} \ad_X$. By \eqref{eq:luogodizeri}, $x_1=0$. Then the endpoint $\exp(U)$ of $\gamma$ is in the four dimensional variety 
$$
\{(0,0,x_3,x_4,0,x_{31},x_{43})\,:\, x_3,x_4,x_{31},x_{43}\in\R\},
$$
because $(x_{21},x_{31},x_{43})\in {\rm Im}\ad_X$.
\item[(iii)] Suppose that $x_4=0$. By \eqref{eq:acoppie}, $x_1\neq 0$ and the vectors $(1,0,0)$ and $(0,1,0)$ generate the column space of $M(x)$. By \eqref{eq:luogodizeri}, $x_3=0$. As before, we conclude that the endpoint $\exp(U)$ of $\gamma$  is in
$$
\{(x_1,x_2,0,0,x_{21},x_{31},0)\,:\,x_1,x_2,x_{21},x_{31}\in\R\},
$$
that has dimension 4.
\end{itemize}
The proof is accomplished.
\end{proof}


\bibliographystyle{acm}


\end{document}